\newtheorem{theorem}{Theorem}[section]
\newtheorem{proposition}[theorem]{Proposition}
\newtheorem{cor}[theorem]{Corollary}
\newtheorem{lemma}[theorem]{Lemma}
\newtheorem{que}{Question}
\newtheorem*{que*}{Question}
\theoremstyle{definition}
\newtheorem{dfn}[theorem]{Definition}
\theoremstyle{remark}
\newcommand{\conc}{^\smallfrown}	
\newcommand{\rest}{\upharpoonright} 
\def\cA{{\mathcal{A}}} \def\cB{{\mathcal{B}}} \def\cC{{\mathcal{C}}}             \def\cP{{\mathcal{P}}}    \def\cT{{\mathcal{T}}}      
  \def\bC{{\mathbb{C}}}
\def\c{{\mathfrak{c}}}
\def\p{{\mathfrak{p}}}
\newcommand{\w}{\omega}
\title{Sequentially compact separable spaces}
\author{C\'esar Corral, Alan Dow and Paul Szeptycki}
\date{}
\begin{document}

\begin{abstract} We consider the following variation of the Scarborough-Stone problem: Is $X^\kappa$ always countably compact whenever $X$ is separable and sequentially compact?
    
\end{abstract}

\maketitle

\section{Introduction}

In their 1966 paper {\it Products of nearly compact spaces}, \cite{SS} Scarborough and Stone observed that a product of a family of sequentially compact spaces ``..will in general (when there are uncountably many factors)
not be sequentially compact...'' and noted that it ``would be
interesting to know whether every product of sequentially compact spaces must
be countably compact.''  This question has come to be known as the Scarborough-Stone problem. The updated question asks: {\em is it consistent that any family of sequentially compact spaces has product that is countably compact?} as many consistent counter-examples to the problem are now known 
(see \cite{vaughanSS}). It has also been shown (even in Scarborough and Stone's original paper, though not explicitly) that if for each $p\in \omega^*$ there is a sequentially compact space $X_p$ that is not $p$-compact, then the product $\prod\{X_p:p\in \omega^*\}$ is not countably compact. Moreover, if there is any such family of spaces, then one can sew them together to obtain a single sequentially compact space $X$ which is not $p$-compact for any $p\in \omega^*$ and so the power $X^{2^{\mathfrak c}}$ is not countably compact. This conglomerate space $X$ is, however, not separable and we were surprised to observe that the following natural variant of the Scarborough-Stone problem had not been previously considered:

\begin{que}\label{MainQ} If $X$ is a separable and sequentially compact space, must every power $X^\kappa$ be countably compact?
\end{que}

This seems to be an open question and while we have not solved it, we have some partial results clarifying the situation.

As with the general Scarborough-Stone problem, Question \ref{MainQ} is equivalent to the following:
\begin{que} If $X$ is a separable and sequentially compact space, is $X$ $p$-compact for some $p\in \omega^*$?
\end{que}

We have a consistent counterexample in Miller's model: indeed in that model there is a separable sequentially compact space of size $\mathfrak c$ which is not $p$-compact for any $p$. Towards a positive answer we only have a partial consistency result: assuming $MA$, for every separable sequentially compact space $X$, {\em if $|X|\leq \mathfrak c$} then $X$ is $p$-compact for some $p$.

This raises the related question whether the continuum can be a bound on the cardinality of separable sequentially compact spaces.  E.g., if it were consistent with Martin's Axiom that ${\mathfrak c}$ is a bound on the size of separable sequentially compact spaces, then we would have a consistent positive answer to Question \ref{MainQ} (alas, this is not the case).

\begin{que}\label{SecondQ} If $X$ is separable and sequentially compact, is then $|X|\leq {\mathfrak c}$?
\end{que}

In regards to this question, we have the following results: In the Cohen model, yes, the continuum is a bound. However, we also have examples giving a consistent negative answer: 
 if ${\mathfrak s}>\omega_1$ and $2^{\omega_1}>\mathfrak c$, then there is a large separable sequentially compact space (namely $2^{\omega_1}$). In addition we show that there is a separable sequentially compact spaces of size $2^{\mathfrak p}$, and so, under ${\mathfrak{p=c}}$ there is a separable sequentially compact space of size $>{\mathfrak c}$. 

We present all these results in the remainder of the paper and conclude with some open questions. 

Our terminology and notation are standard and we refer the reader to \cite{kunenbook} for the set-theoretic notions and \cite{engelking} for the topological.




\section{$MA+\neg\text{CH}$ and small separable sequentially compact spaces}

In this section we show that a consequence of $MA+\neg\text{CH}$ implies that every sequentially compact space of cardinality $\leq{\mathfrak c}$ is $p$-compact for some $p\in \omega^*$ and so every power of $X$ is countable compact. To clearly state this consequence, we need to recall the notion of a tree $\pi$-base for $\omega^*$.  

For our purposes, a tree $\pi$-base for $\omega^*$ is a family $T\subseteq [\omega]^\omega$ such that
\begin{enumerate}
\item $T$ is a tree under the ordering $\subseteq^*$
\item Levels of $T$ are infinite maximal almost disjoint families in $[\omega]^{\omega}$
\item For every $A\in [\omega]^\omega$ there is $t\in T$ such that $t\subseteq^* A$ (i.e., a $\pi$-base for $\omega^*$)
\end{enumerate}
It was proven by Balcar, Pelant and Simon \cite{BPS} that there exists a tree $\pi$-base for $\omega^*$.
\begin{theorem} $\omega^*$ admits a tree $\pi$-base.\qed
\end{theorem}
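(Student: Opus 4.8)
The plan is to reconstruct the Balcar--Pelant--Simon base matrix, building the tree by transfinite recursion whose length is the distributivity number $\h$ of $\cP(\w)/\mathrm{fin}$. Recall that $\h$ is the least $\kappa$ for which $([\w]^\w,\subseteq^*)$ fails to be $\kappa$-distributive; equivalently, the least $\kappa$ such that some family of $\kappa$ open dense sets has non-dense intersection, and that $\h$ is regular with $\aleph_1\le\h\le\c$. The key consequence I will use is that $([\w]^\w,\subseteq^*)$ is $<\h$-distributive: the intersection of fewer than $\h$ open dense sets is again open dense, and any such set contains a maximal almost disjoint (MAD) family. I would produce a $\subseteq^*$-decreasing (refining) sequence $\langle\cA_\alpha:\alpha<\h\rangle$ of MAD families and let $T=\bigcup_{\alpha<\h}\cA_\alpha$, ordered by almost-inclusion; the levels of $T$ will be exactly the $\cA_\alpha$.

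Clauses (1) and (2) of the definition are the routine part. Start from any partition $\cA_0$ of $\w$ into infinitely many infinite pieces. At a successor stage, split each $a\in\cA_\alpha$ into infinitely many infinite pairwise almost disjoint subsets forming a MAD family on $a$, and let $\cA_{\alpha+1}$ be the union of these; this refines $\cA_\alpha$, and it is MAD on $\w$ because any infinite $X$ meets some $a\in\cA_\alpha$ infinitely and hence meets one of the pieces of $a$ infinitely. At a limit $\lambda<\h$, the fewer than $\h$ downward closures $\widehat{\cA_\alpha}$ ($\alpha<\lambda$) have open dense intersection, inside which I choose a MAD family $\cA_\lambda$; since each $\cA_\alpha$ is almost disjoint, every $b\in\cA_\lambda$ is $\subseteq^*$ a unique element of each earlier level, so the predecessors of $b$ form a chain and $T$ is genuinely a tree with the prescribed MAD levels.

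The substantive content is clause (3), the $\pi$-base property: every infinite $A$ must almost-contain a node, i.e.\ there is $t\in T$ with $t\subseteq^* A$. This is the step I expect to be the main obstacle, and it is where the full force of the base matrix theorem is needed. It cannot be obtained by naive bookkeeping, since when $\h<\c$ there are only $\h$ levels but $\c$-many sets $A$ to capture, and one checks directly (e.g.\ by a diagonal argument against a countable binary splitting tree) that a refining sequence of height below $\h$ need not be dense. The mechanism I would use exploits the homogeneity of $([\w]^\w,\subseteq^*)$, which is isomorphic to its restriction $([A]^\w,\subseteq^*)$ below any infinite $A$, so that $\h$ is computed uniformly below every set: I would fix at the outset a family $\langle\cD_\alpha:\alpha<\h\rangle$ of MAD families witnessing the failure of $\h$-distributivity, spread by homogeneity so that its non-refinability is inherited below every infinite set, and demand in the recursion that $\cA_\alpha$ refine $\cD_\alpha$ at every stage. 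Density of $T$ would then follow from a minimality argument: an infinite $A$ that no node almost-contains forces the traces $\{a\cap A:a\in\cA_\alpha,\ |a\cap A|=\w\}$ to be a refining $\h$-sequence of MAD families on $A$ dominating the traces of the $\cD_\alpha$, and the regularity of $\h$ together with the inherited non-distributivity below $A$ is what must be leveraged to derive a contradiction. Reconciling the global recursion length $\h$ with the local capture of each individual $A$ is precisely the delicate interaction that carries all the weight of this theorem, and it is exactly this that the Balcar--Pelant--Simon construction supplies; clauses (1)--(2) are by comparison immediate once $<\h$-distributivity is in hand.
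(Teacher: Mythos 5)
The paper does not actually prove this statement; it is quoted as a black box with a citation to Balcar--Pelant--Simon \cite{BPS}, so what you are really attempting is a proof of the Base Matrix (Base Tree) Theorem itself. Your reconstruction of the easy half is fine: a $\subseteq^*$-refining sequence $\langle\cA_\alpha:\alpha<\h\rangle$ of MAD families exists by exactly the recursion you describe, using $<\h$-distributivity at limits and arbitrary MAD refinements at successors, and its union is a tree whose levels are the $\cA_\alpha$ (modulo the usual pedantry of working in $\cP(\w)/\mathrm{fin}$ so that $\subseteq^*$ is antisymmetric and no set recurs at two levels). Clauses (1) and (2) are therefore in hand.

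But clause (3) is the entire content of the theorem, and your proposal does not prove it; it describes what ``must be leveraged'' and then states that ``it is exactly this that the Balcar--Pelant--Simon construction supplies,'' which is a deferral to the result being proved. The specific mechanism you propose also does not close as stated: fixing witnesses $\langle\cD_\alpha:\alpha<\h\rangle$ to the failure of $\h$-distributivity and demanding that $\cA_\alpha$ refine $\cD_\alpha$ only guarantees that the matrix has no common refinement below \emph{some} set (that is what non-distributivity asserts), whereas density of $T$ requires that \emph{every} $A\in[\w]^\w$ almost-contain a node. Spreading the witnesses ``by homogeneity'' below every infinite set gives, for each $A$, an $\h$-sequence of MAD families on $A$ with no common refinement, but the step from ``the traces on $A$ have no common refinement'' to ``some trace is almost contained in $A$'', i.e.\ the contradiction you say must be derived from regularity of $\h$, is precisely the Base Matrix Lemma. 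The actual BPS argument is substantially more delicate: it requires a case division (roughly, $\h=\c$ versus $\h<\c$, the latter using that the algebra is then nowhere $\h$-distributive in a strong quantitative sense together with a closing-off argument of length $\h$), none of which appears in your sketch. As written, the proposal is an accurate road map with the destination missing rather than a proof.
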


Moreover, assuming $MA$, we have that 
\begin{enumerate}
\item[$(*)$] There is no tree $\pi$-base of height $<\mathfrak c$ and every tree $\pi$-base of height $\c$ includes a cofinal branch.
\end{enumerate}

Let us now show the main result of this section:

\begin{theorem} Assuming $(*)$ every sequentially compact space of cardinality $\leq{\mathfrak c}$ is $p$-compact for some $p\in \omega^*$.
\end{theorem}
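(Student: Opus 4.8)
The plan is to build, by transfinite recursion of length $\c$, a tree $\pi$-base $T$ that is tailored to $X$ so that along \emph{any} cofinal branch every sequence in $X$ converges; the hypothesis $(*)$ will then both guarantee that the recursion runs to height exactly $\c$ and supply such a branch. First I would record the mechanism that turns convergence along a branch into $p$-compactness. Suppose $b=\{t_\beta:\beta<\c\}$ is a cofinal branch of a tree $\pi$-base (so $t_\beta$ lies on level $\beta$ and $t_{\beta'}\subseteq^* t_\beta$ for $\beta<\beta'$), and let $p$ be any ultrafilter extending the proper filter generated by $b$. If $f\colon\omega\to X$ and $f\restriction t_\beta$ converges to some $x$ for some $\beta$, then for every open $U\ni x$ we have $f^{-1}(U)\supseteq^* t_\beta\in p$, whence $\plim_{n\to p}f(n)=x$. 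So it suffices to construct $T$ so that for each $f\colon\omega\to X$ there is a level beyond which $f$ converges on every node; since $|X|^\omega\le\c$, I can enumerate all such $f$ as $\langle f_\alpha:\alpha<\c\rangle$ and treat $f_\alpha$ at stage $\alpha$.

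The engine of the successor step is sequential compactness: for any infinite $t\subseteq\omega$ and any $f$, the sequence $(f(n))_{n\in t}$ has a convergent subsequence, so the family $\{s\in[\omega]^\omega : s\subseteq t,\ f\restriction s \text{ converges}\}$ is dense below $t$. Hence a maximal almost disjoint family on $t$ consisting of $f$-convergent sets exists and is automatically MAD in $[t]^\omega$. At stage $\alpha$ I refine every node $t$ of level $\alpha$ into such a MAD family of $f_\alpha$-convergent subsets; the union over all $t$ on level $\alpha$ is then a MAD family $L_{\alpha+1}$ on $\omega$ refining level $\alpha$, and $f_\alpha$ converges on every node of $L_{\alpha+1}$ and, since convergence is inherited by infinite subsets mod finite, on every node below it. To secure the $\pi$-base clause (3) I simultaneously enumerate $[\omega]^\omega$ as $\langle A_\alpha:\alpha<\c\rangle$ and, choosing a node $t^*$ of level $\alpha$ with $t^*\cap A_\alpha$ infinite, include an $f_\alpha$-convergent subset of $t^*\cap A_\alpha$ among the successors of $t^*$, so that some node of $T$ is $\subseteq^* A_\alpha$.

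The delicate point, and the main obstacle, is the limit stages. At a limit $\lambda<\c$ I must produce a MAD family $L_\lambda$ each of whose members lies on a branch through the earlier levels, which is exactly a common refinement of the $\lambda$ many MAD families constituting levels $\beta<\lambda$. This is possible precisely because $([\omega]^\omega,\subseteq^*)$ is $\lambda$-distributive for $\lambda<\h$; and $(*)$ forces $\h=\c$ (the Balcar--Pelant--Simon base tree has height $\h$, so the first clause of $(*)$, that there is no tree $\pi$-base of height $<\c$, gives $\h\ge\c$, whence $\h=\c$). Thus every limit stage below $\c$ goes through, the convergence of each $f_\alpha$ is preserved downward through limits, and the recursion yields a genuine tree $\pi$-base $T$ of height exactly $\c$.

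Finally, the second clause of $(*)$ provides a cofinal branch $b$ of $T$. Extending the filter generated by $b$ to an ultrafilter $p$ and applying the observation of the first paragraph to each $f_\alpha$ (which converges on the node of $b$ at level $\alpha+1$, a member of $p$) shows that every sequence in $X$ has a $p$-limit; that is, $X$ is $p$-compact. I expect the bookkeeping of the successor step and the verification that unions of node-wise refinements stay MAD to be routine, and the genuine difficulty to be confined to running the limit stages while keeping $T$ a tree $\pi$-base of height precisely $\c$, which is where the full strength of $(*)$ is used.
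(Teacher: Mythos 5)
Your proposal is correct and follows essentially the same route as the paper: a transfinite recursion of length $\mathfrak c$ producing $\subseteq^*$-decreasing MAD families consisting of $f_\alpha$-convergent sets while diagonalizing against $[\omega]^\omega$, with $(*)$ used both to get past stages below $\mathfrak c$ (your appeal to $\mathfrak h=\mathfrak c$ is just a more explicit rendering of the paper's ``otherwise we would have a short tree $\pi$-base'') and to supply the cofinal branch generating the ultrafilter $p$.
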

\begin{proof} Let $X$ be sequentially compact, and $f:\omega\rightarrow X$ any function. Then we have that
$${\mathcal P}_f=\{A\subseteq \omega: f\upharpoonright A\text{ is a convergent sequence}\}$$
is dense in $[\omega]^\omega$ with respect to $\subseteq ^*$. And so if we take any ${\mathcal A}\subseteq {\mathcal P}_f$ that is maximal almost disjoint, then it is also maximal in $[\omega]^{\omega}$.

Now, let us assume in addition that $|X|\leq {\mathfrak c}$ and enumerate $X^\omega$ as $\{f_\alpha:\alpha\leq \mathfrak c\}$ and enumerate $[\omega]^\omega$ as $\{x_\alpha:\alpha<\mathfrak c\}$. 

We recursively construct mad families ${\mathcal A}_\alpha$ for all $\alpha<\mathfrak c$ such that
\begin{enumerate}
\item For all $\beta<\alpha$, $\mathcal A_\alpha$ refines $\mathcal A_\beta$.
\item $\mathcal A_\alpha\subseteq {\mathcal P}_\alpha:=\cP_{f_\alpha}$.
\item For all $a\in \mathcal A_\alpha$ either $a\subset x_\alpha$ or $a\cap x_\alpha=\emptyset$.
\end{enumerate}
Note that if $\mathcal A_\beta$ has been constructed for all $\beta<\alpha$, and there is no such $\mathcal A_\alpha$ then we would have a tree $\pi$-base for $\omega^*$ of height $\alpha<\mathfrak c$ contradicting our assumption $(*)$. And item $(3)$ assures that the construction yields a tree $\pi$-base of height $\mathfrak c$. And so it has a cofinal branch $b$. By (3) this branch generates an ultrafilter $p$ with the property that $p\cap {\mathcal P}_\alpha\not=\emptyset$ for all $\alpha$ and so each $f_\alpha$ has a $p$-limit. \end{proof}

\section{A sequentially compact separable space that is not $p$-compact.}

\begin{theorem}\label{miller} Assume the following variation of NCF: there is $\{p_\alpha:\alpha<{\mathfrak c}\}$ a family of $P$-points each of character $\omega_1$ such that for every ultrafilter $u$ there is a finite-to-one $f$ and an $\alpha<\mathfrak c$ such that $f(u)=p_\alpha$. Then there is a separable sequentially compact space of size $\mathfrak c$ that is not $p$ compact for any ultrafilter $p$. 
\end{theorem}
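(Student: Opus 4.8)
The plan is to separate the argument into a clean Rudin--Keisler reduction and a construction, and to do the construction in two stages: first a single-$P$-point gadget, then an amalgamation, where the amalgamation is the genuinely hard part.

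\textbf{Reduction to the $p_\alpha$'s.} First I would isolate the monotonicity of $p$-compactness under finite-to-one maps: if $f$ is finite-to-one with $f(u)=p$ and $g\colon\omega\to X$, then applying $u$-compactness to $g\circ f$ produces a $u$-limit $x$, and for every neighborhood $U$ of $x$ one has $f^{-1}\{m:g(m)\in U\}\in u$, hence $\{m:g(m)\in U\}\in f(u)=p$, so $x$ is a $p$-limit of $g$. Thus $u$-compactness implies $p$-compactness, and contrapositively, if $X$ is not $p$-compact then $X$ is not $u$-compact. Since the variation of NCF gives, for every $u$, a finite-to-one $f$ and an $\alpha$ with $f(u)=p_\alpha$, it suffices to build one separable sequentially compact space $X$ of size $\mathfrak{c}$ that fails to be $p_\alpha$-compact for \emph{every} $\alpha<\mathfrak{c}$; such an $X$ is then automatically not $u$-compact for any ultrafilter $u$.

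\textbf{The single-$P$-point gadget.} For each $\alpha$ fix a $\subseteq^*$-decreasing tower $\langle a^\alpha_\xi:\xi<\omega_1\rangle$ generating $p_\alpha$ (available since $\chi(p_\alpha)=\omega_1$), arranged so that each $n$ drops out of the tower at a countable stage. Mapping $\omega$ into $2^{\omega_1}$ by $x^\alpha_n(\xi)=1\iff n\in a^\alpha_\xi$, the cluster points of $(x^\alpha_n)_n$ are exactly the initial-segment points $e_\eta=\chi_{[0,\eta)}$ with $\eta\le\omega_1$, and the (unique, Hausdorff) $p_\alpha$-limit is the top point $e_{\omega_1}=\mathbf 1$. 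Deleting $\mathbf 1$ from the closure gives a separable space in which $(x^\alpha_n)$ has no $p_\alpha$-limit, while it remains sequentially compact: an infinite set has no pseudo-intersection with the tower, so it drops out at some countable level $\eta<\omega_1$ and a subsequence converges to $e_\eta$. This gadget is separable, sequentially compact, not $p_\alpha$-compact, and crucially \emph{thin} (organized along one $\omega_1$-chain). The $P$-point hypothesis enters through the fact that, where points have countable character, having a $p_\alpha$-limit is equivalent to having an honest convergent subsequence indexed by a member of $p_\alpha$; this equivalence is what will let me certify the \emph{absence} of $p_\alpha$-limits by controlling convergent subsequences alone.

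\textbf{Amalgamation (the crux).} I would then fuse the $\mathfrak{c}$ gadgets over a single shared copy of $\omega$ as the countable dense set, equipping $\omega\cup\{e^\alpha_\eta:\alpha<\mathfrak{c},\,\eta<\omega_1\}$ (of size $\mathfrak{c}$) with the sequential topology generated by declaring, for each $\alpha$, a $p_\alpha$-adapted enumeration $s_\alpha$ of $\omega$ and the convergences $s_\alpha\restriction A\to e^\alpha_\eta$ for the sets $A$ dropping out of the $\alpha$-th tower at level $\eta$, while omitting every top point $\mathbf 1_\alpha$. Separability is then immediate, and by construction each $s_\alpha$ has no $p_\alpha$-limit. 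The main obstacle is \emph{sequential compactness} of the amalgam. The naive alternative, pushing the diagonal $n\mapsto(x^\alpha_n)_\alpha$ into $2^{\omega_1\times\mathfrak{c}}$, fails decisively: the family $\{a^\alpha_\xi\}$ has size $\mathfrak{c}\ge\mathfrak{s}$ and is splitting, so the diagonal sequence has no convergent subsequence at all. Concretely, the danger is that the limit points of distinct blocks become mutually independent, forming a discrete-like set on which a cross-block sequence has no convergent subsequence. This forces the blocks to be \emph{glued} rather than juxtaposed, and this is exactly where the coherence supplied by the finite-to-one covering, not mere cardinal bookkeeping, must be used to organize the $\mathfrak{c}$ towers so that an arbitrary sequence still drops out of the relevant towers at a countable stage and converges to a retained $e^\alpha_\eta$ with $\eta<\omega_1$. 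I expect the technical heart of the proof to be precisely this verification: that the generated topology is Hausdorff, that every sequence (from $\omega$ and among the limit points alike) has a convergent subsequence landing at a retained point, and that no $s_\alpha$ inadvertently acquires a $p_\alpha$-limit inside another block.
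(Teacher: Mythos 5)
Your reduction step is sound and matches the paper's (the paper states it as a lemma: if $X$ is not $p_\alpha$-compact for any $\alpha$, then $X$ is not $p$-compact for any ultrafilter, proved by pulling a witnessing sequence back along the finite-to-one map), and your single-$P$-point gadget is essentially the Franklin space $X_{p_\alpha}$ that the paper also uses. But the amalgamation, which you yourself identify as ``the crux'' and ``the technical heart,'' is exactly where the proposal stops being a proof. You never actually produce the space: you propose fusing all $\mathfrak c$ gadgets over a \emph{single shared copy of $\omega$} via ``$p_\alpha$-adapted enumerations,'' correctly observe that the danger is that the $\mathfrak c$-many families of limit points become a discrete-like set on which cross-block sequences have no convergent subsequence, and then assert that coherence from the finite-to-one covering ``must be used'' to fix this --- without saying how. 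As described, the single-shared-$\omega$ scheme does face exactly that obstacle: an arbitrary infinite $A\subseteq\omega$ need not be handled by any one of the $\mathfrak c$ declared tower-convergences, and a sequence drawn from the $\mathfrak c\cdot\omega_1$ limit points has no mechanism forcing a convergent subsequence. The NCF-type hypothesis is used only in the reduction; it supplies no coherence among the towers for the construction itself.

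The paper's construction resolves this with an idea absent from your proposal: do \emph{not} share one copy of $\omega$. Instead take the double arrow space over the Cantor tree, $X=2^{<\omega}\cup(2^\omega\times 2)$ with the lexicographic order topology, let the countable dense set be $2^{<\omega}$, enumerate $2^\omega$ as $\{f_p:p\in\mathcal U\}$, and replace each point $(f_p,0)$ by a Franklin space $X_p$ whose copy of $\omega$ is the branch $\{f_p\upharpoonright n:n<\omega\}$. Each gadget thus lives over its own branch, so failure of $p_\alpha$-compactness is witnessed branch-by-branch, while sequential compactness of the whole space is verified by a case split: a sequence trapped in one $X_p$ converges by sequential compactness of the Franklin space, and a sequence not so trapped has a monotone (in the lexicographic order) subsequence converging to some $(f,1)$ or to some $(f,\alpha)$ in the gadget over $f$. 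This order-theoretic gluing is the missing ingredient; without it, or some substitute of comparable strength, your amalgamation step does not go through.
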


The hypotheses of this theorem holds in the Miller model (see \cite{BSNCF}). In fact, NCF is witnessed by the stronger property that any two ultrafilters are mapped by finite-to-one maps to one of the $p_\alpha$'s. 

We first note a consequence of the hypothesis of Theorem \ref{miller}
\begin{lemma} Under the assumptions of the theorem, if $X$ is a space that is not $p_\alpha$ compact for any $\alpha<{\mathfrak c}$. Then $X$ is not $p$-compact for any ultrafilter $p$. 
\end{lemma}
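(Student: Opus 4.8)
The plan is to prove the contrapositive: if $X$ is $p$-compact for some ultrafilter $p$, then $X$ is $p_\alpha$-compact for some $\alpha<\mathfrak c$. The whole argument rests on the standard fact that $p$-compactness is preserved downward along the Rudin--Keisler order, which I would isolate and prove directly rather than cite.

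Concretely, first I would record the following observation. Let $f:\omega\to\omega$ be any function and let $q=f(p)$ be the image ultrafilter, so that $A\in q$ iff $f^{-1}(A)\in p$. If $X$ is $p$-compact, then $X$ is $q$-compact. To see this, take an arbitrary sequence $(x_n)_{n\in\omega}$ in $X$ and pass to the reindexed sequence $(x_{f(n)})_{n\in\omega}$; by $p$-compactness it has a $p$-limit $x\in X$. I claim $x$ is a $q$-limit of $(x_n)_{n\in\omega}$. Indeed, for every neighborhood $U$ of $x$ we have $\{n:x_{f(n)}\in U\}\in p$, and since $\{n:x_{f(n)}\in U\}=f^{-1}(\{m:x_m\in U\})$, the definition of $q=f(p)$ gives $\{m:x_m\in U\}\in q$. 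Hence every sequence has a $q$-limit and $X$ is $q$-compact.

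Granting this, the lemma is immediate. Assume $X$ is $p$-compact. By the variation of NCF in the hypothesis of Theorem \ref{miller} there are a finite-to-one map $f$ and an index $\alpha<\mathfrak c$ with $f(p)=p_\alpha$. Applying the observation with $q=p_\alpha$ yields that $X$ is $p_\alpha$-compact, contradicting the standing assumption that $X$ is not $p_\alpha$-compact for any $\alpha$. Therefore no such $p$ exists and $X$ is not $p$-compact for any ultrafilter $p$.

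I do not anticipate a real obstacle: the only substance is the reindexing argument above. In fact the hypothesis is stronger than what this reduction needs, since the argument uses only that $p_\alpha$ lies Rudin--Keisler below $p$ and never that the witnessing map is finite-to-one; the finite-to-one, $P$-point and character-$\omega_1$ features of the $p_\alpha$ are what the remainder of the construction in Theorem \ref{miller} will exploit, not this step.
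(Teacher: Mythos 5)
Your proof is correct and is essentially the paper's argument read contrapositively: both hinge on reindexing a sequence along $f$ and the identity $\{n:x_{f(n)}\in U\}=f^{-1}(\{m:x_m\in U\})$ together with the definition of $f(p)=p_\alpha$. Your closing remark that the finite-to-one hypothesis is not needed for this step is also accurate.
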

\begin{proof} Fix $p$ and fix $\alpha$ and $f:\omega\rightarrow \omega$ finite-to-one such that $f(p)=p_\alpha$. Fix $x_n\in X$ so that $(x_n:n\in \omega)$ has no $p_\alpha$-limit. Let $y_n=x_{f(n)}$. Take any $x\in X$. Then $x$ is not a $p_\alpha$ limit, so $x$ has a neighborhood $U$ such that $B=\{n:x_n\not\in U\}\in p_\alpha$. Therefore $f^{-1}(B)\in p$ and so $\{n:y_n\not\in U\}\in p$ and so $x$ is not a $p$-limit of the sequence $(y_n:n\in\omega)$.
\end{proof}

To give the construction of the example establishing Theorem \ref{miller}, we need to recall the example of the Franklin space that is sequentially compact but not $p$-compact for a fixed $P$-point $p$ of character $\omega_1$.  Let $\{u_\alpha:\alpha<\omega_1\}$ be a $\subseteq^*$-decreasing sequence in $[\omega]^\omega$ generating $p$ and let 
$X_p$ be a disjoint union of copies of $\omega$ and $\omega_1$ where the points of $\omega$ are isolated and for all $\beta<\alpha\in \omega_1$ and each finite $F\subset \omega$,  
$$U=\{\gamma:\beta<\gamma\leq \alpha\}\cup \left(u_\beta\setminus (u_\alpha\cup F)\right)
$$
is a typical basic open set containing $\alpha$. Clearly, no $\alpha$ is a $p$-limit of $\omega$ so $X_p$ is not $p$-compact.

That this space is sequentially compact follows from the fact that $\omega_1$ as a subspace of $X_p$ has its usual order topology, which is sequentially compact. So we need to check that every infinite sequence in $\omega$ has a convergent subsequence. But for any infinite $A\subseteq \omega$, if $\alpha$ is minimal such that $A\setminus u_\alpha$ is infinite, then $A$ contains an infinite set $B$ such that $B\subseteq^* u_\beta$ for all $\beta<\alpha$ but $B\cap u_\alpha=\emptyset$. I.e., $B$ converges to $\alpha$. 

\vspace{3mm}

{\it Proof of Theorem \ref{miller}. }Let $X=2^{<\omega}\cup (2^\omega \times 2)$ be the double arrow space over the Cantor tree. To be precise, we first fix the lexicographical order on $2^{\leq \omega}$. I.e., for $f\not=g\in 2^{\leq \omega}$, we define $f<g$ if $f\subseteq g$, or if $f(n)<g(n)$ where $n$ is minimal with $f(n)\not=g(n)$ and we extend this order to $X$ by declaring $(f,0)<(f,1)$ for all $f\in 2^\omega$. 
We give $X$ the order topology. 
Let ${\mathcal U}$ be the enumerated family of $P$-points in the given Miller model.  We enumerate $2^\omega$ as $\{f_p:p\in{\mathcal U}\}$ and replace each point $(f_p,0)$ by the Franklin space $X_{p}$ keeping $\{f_p\upharpoonright n:n<\omega\}$ as the copy of $\omega$ in $X_{p}$ and replacing $(f_p,0)$ by $\{f_p\}\times \omega_1\setminus 2$ a copy of $\omega_1$ (we remove $\{0,1\}$ to avoid duplication with the points $(f,1)$ that remain in our space). For each $p\in{\mathcal U}$, let $\{u^p_\alpha:\alpha<\omega_1\}$ be the $\subseteq^*$ decreasing base for $p$.

Of course, we need to clarify the topology on the resulting space which we denote as $Z=2^{<\omega}\cup 2^\omega\times\{1\}\cup 2^\omega\times \omega_1\setminus 2$. 

Points of $2^{<\omega}$ are isolated. For any open interval $I$ in $X$, let 
$$I_Z=\left(I\cap \left(2^{<\omega}\cup 2^\omega\times\{1\}\right)\right)\cup \bigcup_{(f,0)\in I} \{f\}\times (\omega_1\setminus 2)$$
and declare $I_Z$ to be open in $Z$.

Note that for each $f\in 2^\omega$, intervals of the form $I_Z$ where $I=[(f,1),s]$ where $s\in 2^{<\omega}$ such that $f<s$, form a neighborhood base at $(f,1)$ in $Z$.  

To define the neighborhood base at points $(f_p,\alpha)$ for $\alpha<\omega_1$ we need the following notation for $s\in 2^{<\omega}$ and $f\in 2^{\omega}$ such that $s\subseteq f$:
$$[s,f]=\{s\}\cup \{t\in 2^{< \omega} : s\subseteq t\text{ and }t<f\text{ and }t\not\subseteq f\}\cup \{(g,i)\in Z:s\subseteq g<f\}$$

Finally, for $(f_p,\alpha)\in Z$ and $\beta<\alpha$ and $n\in \omega$
Let $$T_{\beta,\alpha, n,f_p}=\bigcup\{[f_p\upharpoonright m,f_p]: m\in (u^p_{\beta}\setminus u^p_\alpha) \setminus n\}$$

The family $\{T_{\beta,\alpha, n,f_p}:n\in \omega, \beta<\alpha\}$ we declare as a neighborhood base at the point $(f_p,\alpha)$. 

It is easy to check that this is a zero-dimensional topology on $Z$.

Note that the subspace $\{f_p\upharpoonright n:n<\omega\}\cup \{f_p\}\times \omega_1$ is a closed copy of the Franklin space $X_p$ (and so let's just denote this subspace as $X_p$).

It immediately follows that $Z$ is not $p$ compact for any $p\in {\mathcal U}$. 

So we need to check that $Z$ is sequentially compact and then the proof of the theorem is complete. 

So fix a sequence $(x_n:n\in \omega)$ and by going to a subsequence we can assume we are in one of the following cases. 

\noindent CASE 1: There is a $p$ such that $x_n\in X_{p}$ for all $n$. In this case by sequential compactness of the Franklin space we have a convergent subsequence. 

\noindent CASE 2: Not case 1. Then by going to a subsequence we have two subcases

Case 2a:  $x_n\in 2^{<\omega}$ for all $n$. In this case we may go to a further subsequence and assume, wlog, that $\{x_n:n\in \omega\}$ is monotone and forms an antichain (otherwise there would a subsequence contained in a branch putting us in CASE 1). Being monotone it must converge in the space $2^{\leq \omega}$ to some $f\in 2^\omega$ in the usual lexicographic order topology. If it is monotone decreasing then the subsequence converges in $X$ to $(f,1)$. But then it also converges to $(f,1)$ in $Z$. If it is monotone increasing, then let $s_n=f \cap x_n$. Then $s_n$ is a subset of the Franklin space $X_p$ where $f=f_p$ and so the $s_n$ converges to some $(f,\alpha)$. But then the sequences $(x_n)$ also converges to $(f,\alpha)$.

Case 2b: Not case 1 or 2a. Then we have a subsequence that is disjoint from $2^{<\omega}$ and we can choose for each $n$ a $g_n$ such that either $x_n=(g_n,i_n)$ for $i_n=1$ for all $n$ or $i_n=\alpha_n$ for some $\alpha_n\in \omega_1$. In any case, by going to a subsequence we can assume that the sequence of $g_n$'s is monotone and converges in the space $2^\omega$ (with respect to the lexicographic order topology) to some $f$. If the subsequence is decreasing then, in $Z$, the sequence of $x_n$'s converge to $(f,1)$ and otherwise if increasing then the same argument as in Case 2a shows that the sequence converges to $(f,\alpha)$ for some $\alpha\in \omega_1$. \qed

\section{The cardinality of sequentially compact separable spaces}

One scenario in which Problem \ref{MainQ} could have a consistent positive answer is to have all separable sequentially compact spaces have cardinality bound by ${\mathfrak c}$ consistent with the principle $(*)$ that every tree $\pi$-base has height $\mathfrak c$ and a cofinal branch.  While this is open, we turn to the weaker question of whether $\mathfrak c$ is a bound on the cardinality of such spaces. As we will see this is consistent and independent. 

A model where $\mathfrak c$ is an upper bound is the Cohen model and this follows from the proof of one of the main results from \cite{DowJuhasz}.
The proof of Theorem 4.2 from that paper actually shows the following:

\begin{theorem}
    
 Let $V$ be a model for {\sf GCH} and $\kappa>\aleph_1$ be a regular cardinal. Then in the forcing extension, $V_{\bC_\kappa}$ we have that for any regular space $X$ if a countable $A\subseteq X$ has closure larger than $\kappa$ then there is $B\subseteq A$ such that $B$ has no nontrivial sequences converging in $X$.\qed
\end{theorem}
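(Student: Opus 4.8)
The plan is to prove the statement directly. Write $\bC_\kappa=\mathrm{Fn}(\kappa,2)$ and work in $V_{\bC_\kappa}$, where $\mathfrak c=\kappa$; we may take the underlying set of $A$ to be $\omega$. Let $\mathcal C\subseteq[\omega]^\omega$ be the family of infinite subsets of $A$ that converge in $X$, and for $x\in X$ let $\mathcal N_x=\{A\cap U:x\in U\text{ open}\}$ be the trace on $A$ of the neighbourhood filter of $x$. Write $\sigma(A)$ for the sequential closure of $A$, i.e.\ $A$ together with all limits of members of $\mathcal C$. The first, purely \textsf{ZFC}, step is a cardinality count: since $X$ is Hausdorff the limit of a convergent sequence is unique, so the map sending $s\in\mathcal C$ to its limit shows $|\sigma(A)\setminus A|\le|\mathcal C|\le|[\omega]^\omega|=\mathfrak c=\kappa$, whence $|\sigma(A)|\le\kappa$. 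As $|\overline A|>\kappa$ by hypothesis, we may fix a point $x\in\overline A\setminus\sigma(A)$: a point in the closure of $A$ that is the limit of no sequence drawn from $A$.

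Next I would isolate the one topological consequence of $x\notin\sigma(A)$ that drives everything. For any $s\in\mathcal C$, say $s\to y$, the set $s\cup\{y\}$ is closed with unique accumulation point $y$, so $\overline s\subseteq s\cup\{y\}$; since $x\notin A$ (so $x\notin s$) and $x\neq y$ (as $x\notin\sigma(A)$), we get $x\notin\overline s$. Hausdorffness then furnishes an open $U_s\ni x$ with $U_s\cap s=\emptyset$; setting $F_s=A\cap U_s$ we have $F_s\in\mathcal N_x$ and $F_s\cap s=\emptyset$. Thus every convergent sequence is separated off from $x$ by a member of the filter $\mathcal N_x$. Now observe that it suffices to produce an infinite $B\subseteq A$ that is almost disjoint from every member of $\mathcal C$: any nontrivial convergent subsequence $t\subseteq B$ would itself lie in $\mathcal C$, and then $B\cap t=t$ would be infinite, a contradiction; so such a $B$ has no nontrivial convergent sequence and is the set required by the theorem.

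The heart of the proof---and the sole place where genuine Cohen genericity is used, rather than merely $\mathfrak c=\kappa$---is the construction of this $B$. I would run it through a capturing elementary submodel: fix a continuous increasing chain $\langle M_\alpha:\alpha<\kappa\rangle$ of elementary submodels of some $H(\theta)$, each of size $<\kappa$ and containing (names for) $X$, $A$ and the topology $\tau$, with union $M$ of size $\kappa$; since $V\models\textsf{GCH}$ and $\kappa>\aleph_1$ is regular one arranges $(\kappa^\omega)^{V_{\bC_\kappa}}=\kappa$ and $M^\omega\subseteq M$, so that $\mathcal C\subseteq M$ and each member of $\mathcal C$ is captured at some stage $M_\alpha$. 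Because $x\notin M$, the filter $\mathcal N_x$---all of whose members are reals and hence lie in $M$---is not itself an element of any $M_\alpha$, and this noncapture is exactly the room needed: using the finite-support product structure of $\bC_\kappa$ and the mutual genericity of the added reals over the $M_\alpha$'s, one selects from the sets $F_s\in\mathcal N_x$ a single infinite $B$ that is positive for $\mathcal N_x$ yet almost disjoint from the (fewer than $\kappa$, hence cofinally bounded in the chain) convergent sequences, in effect diagonalising $\mathcal C$ against the generic filtration. Granting this $B$, the reduction of the previous paragraph completes the proof.

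I expect this last construction to be the main obstacle, and it is precisely the step that cannot be carried out in \textsf{ZFC}: it is consistent that a regular separable space has a countable dense $A$ with a point in $\overline A\setminus\sigma(A)$ while every infinite subset of $A$ still has a convergent subsequence (so that $\mathcal C$ is tall and no $B$ as above exists); thus the conclusion must be extracted from the genericity of the Cohen reals and not from cardinal arithmetic alone. The hypotheses enter exactly here: ground-model \textsf{GCH} together with $\kappa>\aleph_1$ regular provides the capturing model $M$ of size $\kappa$ collecting all of $\mathcal C$, while $|\overline A|>\kappa$ forces a closure point $x$ to escape $M$, against which the generic reals can be played to build $B$.
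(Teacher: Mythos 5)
The paper does not actually contain a proof of this theorem: it is stated as what ``the proof of Theorem 4.2'' of the cited Dow--Juh\'asz paper shows, and the box is a citation, not an argument. So your proposal has to be judged on its own, and as it stands it is not a proof. Your first two paragraphs are correct but constitute only the routine {\sf ZFC} reductions: $|\sigma(A)|\le|\mathcal C|\le\mathfrak c=\kappa$, hence some $x\in\overline A\setminus\sigma(A)$ exists; each convergent $s\subseteq A$ is disjoint from some $F_s\in\mathcal N_x$; and ``$B$ almost disjoint from every member of $\mathcal C$'' is literally equivalent to ``$B$ has no infinite convergent subset'' (since $\mathcal C$ is closed under infinite subsets), so that reduction has no content. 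The entire theorem lives in your third paragraph, and there no argument is given: ``one selects \dots a single infinite $B$ \dots in effect diagonalising $\mathcal C$ against the generic filtration'' describes the desired conclusion rather than producing it, and you concede that this is ``the main obstacle.''

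Moreover, the one concrete mechanism you gesture at cannot work as stated. You call the convergent sequences ``fewer than $\kappa$, hence cofinally bounded in the chain,'' but $\mathcal C$ has cardinality exactly $\mathfrak c=\kappa$ as soon as $A$ has even one convergent subsequence, so $\mathcal C$ is cofinal in the filtration $\langle M_\alpha:\alpha<\kappa\rangle$ and a transfinite diagonalisation against it has no reason to leave an infinite set. The inference ``because $x\notin M$, the filter $\mathcal N_x$ is not an element of any $M_\alpha$'' is also a non sequitur as written; it can be repaired (for regular $X$ the map $y\mapsto\mathcal N_y$ is injective on $\overline A$, and $|\overline A|>\kappa\ge|M|$ lets one choose $x$ with $\mathcal N_x\notin M$ and $x\notin\sigma(A)$), but you do not do this. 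What is actually required is a genuine forcing argument --- working with nice names and countable supports for the members of $\mathcal C$ and for the trace filters, a counting or $\Delta$-system step using the ground-model {\sf GCH} and the regularity of $\kappa$, and the homogeneity/mutual genericity of the Cohen poset --- to show that $\mathcal C$ cannot be tall when $|\overline A|>\kappa$. That is exactly the content of the Dow--Juh\'asz proof the paper appeals to, and it is entirely absent from the proposal; the central step is missing.
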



And so as an immediate corollary we obtain

\begin{cor}    Let $V$ be a model for {\sf GCH} and $\kappa>\aleph_1$ be a regular cardinal. Then in $V_{\bC_\kappa}$ every sequentially compact separable space has size at most $\c$.\qed
    
\end{cor}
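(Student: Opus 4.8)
The plan is to read the corollary off from the preceding theorem by a short argument by contradiction, using only two auxiliary facts: that $\c=\kappa$ in $V_{\bC_\kappa}$, and that sequential compactness forbids infinite subsets with no convergent subsequence. First I would record the cardinal arithmetic: since the ground model satisfies {\sf GCH} and $\kappa>\aleph_1$ is regular, we have $\kappa^{\aleph_0}=\kappa$, so adding $\kappa$ Cohen reals yields a model in which $\c=\kappa$. Thus the target bound ``$|X|\le\c$'' is literally ``$|X|\le\kappa$'', matching the cardinal appearing in the theorem.

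Next I would set up the contradiction. Suppose $X$ is a (regular) separable sequentially compact space with $|X|>\c=\kappa$. Fix a countable dense set $A\subseteq X$. Since $A$ is dense, $\overline A=X$, and therefore $\overline A$ has size $>\kappa$. At this point the theorem applies verbatim: $A$ is a countable subset of the regular space $X$ whose closure is larger than $\kappa$, so there is $B\subseteq A$ having no nontrivial sequences converging in $X$.

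The crux is that this $B$ must be infinite and that an infinite subset of a sequentially compact space cannot avoid convergent subsequences. A finite set vacuously has no nontrivial convergent sequences, so the content of the theorem is precisely that it produces an infinite ``hard'' set $B$; being a subset of the countable set $A$, it is countably infinite. Enumerating $B$ injectively as a sequence and invoking sequential compactness of $X$ then produces a convergent subsequence of distinct points of $B$, i.e. a nontrivial sequence drawn from $B$ converging in $X$. This contradicts the defining property of $B$, so no such $X$ exists and every separable sequentially compact $X$ satisfies $|X|\le\c$.

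I expect the only genuinely delicate points to be bookkeeping rather than mathematics: verifying $\c=\kappa$ in the Cohen model, and being explicit that the set $B$ delivered by the theorem is infinite, so that sequential compactness can legitimately be applied to it. I would also note for the record that the ambient convention here is that the spaces under consideration are regular, matching the hypothesis of the theorem that the corollary invokes.
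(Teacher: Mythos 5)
Your argument is exactly the intended one: the paper marks this corollary as immediate from the preceding theorem, and your deduction (compute $\c=\kappa$ in $V_{\bC_\kappa}$, apply the theorem to a countable dense set, note the resulting $B$ must be infinite, and contradict sequential compactness) is the standard reading. The attention to the two bookkeeping points -- that $\kappa^{\aleph_0}=\kappa$ under {\sf GCH} and that the theorem's content lies in $B$ being infinite -- is appropriate and correct.
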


Then it is consistent that separable sequentially compact spaces are bound in size by the continuum.

We present two consistent examples of large (i.e., of cardinality $>{\mathfrak c}$) separable sequentially compact spaces. The first is easily obtained from van Douwen's characterization of sequential compactness of $2^\kappa$ and some known consistency results concerning cardinal invariants of the continuum. Recall the definition of a splitting family and the cardinal $\mathfrak s$:

\begin{dfn} A family ${\mathcal S}\subseteq [\omega]^{\omega}$ is a splitting family if for every $a\in[\omega]^\omega$ there is $s\in {\mathcal S}$ such that $|s\cap a|=|a\setminus s|=\omega$. The minimal cardinality of a splitting family is the splitting number and is denoted ${\mathfrak s}$
\end{dfn}

Van Douwen proved that $\mathfrak s$ is the minimal $\kappa$ such that $2^{\kappa}$ is not sequentially compact \cite{vanDouwentheintegers}. And so, in any model where $\omega_1<\mathfrak s$ and $2^\omega<2^{\omega_1}$, the space $2^{\omega_1}$ would be a separable sequentially compact space of cardinality larger than the continuum. 

To obtain such a model, start with a model of $MA+\mathfrak c=\omega_2$ and so ${\mathfrak s}=\omega_2$ in that model. Then force with the countably closed partial order $Fn(\omega_3,2,<\omega_1\}$ that adds $\omega_3$ Cohen subsets of $\omega_1$. Since no new reals are added, $\mathfrak s=\mathfrak c=\omega_2$ is preserved but $2^{\omega_1}=\omega_3>\mathfrak c$.


Of course $MA$ fails in this model (and indeed, so does the principle $(*)$) and we are more interested in whether large examples can be ruled out by MA or stronger assumptions, so, it is natural to ask whether MA or PFA is consistent with all separable, sequentially compact spaces have size $\leq\mathfrak c$. Alas, the answer is `no'.  

The example will be the Stone space of a so-called {\em $T$-algebra}, a particular type of Boolean algebra. We will show that there is such a Stone-space of size $2^{\frak p}$ and so it follows from MA (even ${\frak p}={\frak c}$) that there is a $T$-algebra whose Stone space is separable, sequentially compact and of cardinality $>\mathfrak c$.

For the remainder of this section we will deal with Boolean algebras and restrict our attention to subalgebras of $\cP(\w)$. Given a Boolean algebra $\cA\subseteq\cP(\w)$ and $x\subseteq\w$, we denote by $\cA\langle x\rangle$ the Boolean algebra generated by $\cA$ and $x$. The Stone space of a Boolean algebra $\cA$ will be denoted by $St(\cA)$ and is the set of all ultrafilters on $\cA$ with the topology generated by the base $\{b^*:b\subseteq\w\}$, where $b^*$ is the family of all ultrafilters containing $b$. 
We will always assume that every Boolean algebra $\cA$ contains all finite sets, hence $St(\cA)$ is always a compact Hausdorff separable space.
A Boolean algebra $\cB$ is minimal over $\cA$ if there is no subalgebra $\cC\subseteq\cB$ that properly contains $\cA$ \cite{koppelberg}. If $\cB$ is a minimal extension over $\cA$, then $\cB=\cA\langle x\rangle$ for some (equivalently for every) $x\in\cB\setminus\cA$. 
It is shown in \cite{koppelberg} that if $\cB$ is a minimal extension of $\cA$ then there is a unique ultrafilter $u$ on $\cA$ which does not generate an ultrafilter on $\cB$.

\begin{dfn}[\cite{koppelberg},\cite{koszmidertalgebras}]
    We say that $x\subseteq\w$ is \emph{minimal} for $(\cA,u)$ if $u$ is the unique ultrafilter of the Boolean algebra $\cA\subseteq\cP(\w)$ which does not generate an ultrafilter in $\cA\langle x\rangle$. 
\end{dfn}

We will make use of the following two results.

\begin{lemma}{\cite{koszmidertalgebras}}\label{Lemmakoszmider}
    Let $\cA\subseteq\cP(\w)$ be a Boolean algebra, and let $u$ be an ultrafilter on $\cA$. Then $x$ is minimal for $(\cA,u)$ if and only if $\w\setminus x$ is minimal for $(\cA,u)$.\qed
\end{lemma}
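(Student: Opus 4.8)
The plan is to reduce everything to the single observation that the extension generated by $x$ coincides with the one generated by its complement, i.e.\ $\cA\langle x\rangle=\cA\langle\w\setminus x\rangle$. Since being minimal for $(\cA,u)$ is a property that refers only to the pair $\cA\subseteq\cA\langle x\rangle$ together with the ultrafilter $u$, once this identity of algebras is in hand the equivalence drops out with no further work.

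First I would recall that any Boolean subalgebra of $\cP(\w)$ is closed under complementation relative to the top element $\w$. Hence the complement $\w\setminus x$ belongs to $\cA\langle x\rangle$, so $\cA\cup\{\w\setminus x\}\subseteq\cA\langle x\rangle$, and therefore $\cA\langle\w\setminus x\rangle\subseteq\cA\langle x\rangle$. Applying the same argument with $\w\setminus x$ in place of $x$, and using $\w\setminus(\w\setminus x)=x$, gives the reverse inclusion. Thus the two generated algebras are equal.

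Next I would simply unfold the definition. To say that $x$ is minimal for $(\cA,u)$ is, verbatim, to say that $u$ is the unique ultrafilter on $\cA$ that does not generate an ultrafilter on $\cA\langle x\rangle$. This is a statement about $\cA$, the ultrafilter $u$, and the extension $\cA\langle x\rangle$ alone; it makes no reference to the generator beyond the algebra it produces. Since we have shown $\cA\langle x\rangle=\cA\langle\w\setminus x\rangle$, the defining condition for $x$ is literally identical to the defining condition for $\w\setminus x$, and the two are therefore minimal for $(\cA,u)$ simultaneously.

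There is essentially no obstacle here; the only points that warrant a moment's care are confirming that complementation is a Boolean-algebra operation, so that $\w\setminus x$ is automatically available inside $\cA\langle x\rangle$, and checking the degenerate case $x\in\cA$. In that case $\cA\langle x\rangle=\cA=\cA\langle\w\setminus x\rangle$ admits no non-generating ultrafilter, so neither $x$ nor $\w\setminus x$ is minimal and the equivalence holds vacuously, consistent with the general argument.
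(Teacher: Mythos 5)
Your argument is correct. The paper itself gives no proof of this lemma (it is quoted from Koszmider with a \qed), so there is nothing to compare against, but your reduction is the natural one: since $\w$ is the top element of any subalgebra of $\cP(\w)$ under consideration, $\w\setminus x$ is the Boolean complement of $x$ in $\cA\langle x\rangle$, whence $\cA\langle x\rangle=\cA\langle\w\setminus x\rangle$, and the defining property of being minimal for $(\cA,u)$ depends only on the pair $\cA\subseteq\cA\langle x\rangle$ and on $u$, not on the chosen generator. Your handling of the degenerate case $x\in\cA$ is also right: then neither set is minimal for any $u$ and the equivalence is vacuous.
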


\begin{proposition}{\cite{koszmidertalgebras}}\label{characterizationminimality}
    Suppose that $\cA\subseteq\cP(\w)$ is a Boolean algebra, $x\in\cP(\w)\setminus\cA$ and $u$ is an ultrafilter on $\cA$. Then the following are equivalent.
    \begin{enumerate}
        \item $x$ is minimal for $(\cA,u)$,
        \item $u=\{a\in \cA:a\cap x\in\cA\}$ and
        \item $u^*=\{a\in \cA:a\cap x\notin\cA\}$.\qed
    \end{enumerate}
\end{proposition}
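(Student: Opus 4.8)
The plan is to establish the cycle $(1)\Rightarrow(2)\Rightarrow(1)$ and to dispatch $(2)\Leftrightarrow(3)$ formally, everything resting on a single criterion for when an ultrafilter survives into $\cA\langle x\rangle$. \textbf{Step 0.} First I would record the normal form: every element of $\cA\langle x\rangle$ is of the shape $(a\cap x)\cup(b\setminus x)$ with $a,b\in\cA$. From this I would extract the governing criterion: an ultrafilter $v$ on $\cA$ generates an ultrafilter on $\cA\langle x\rangle$ if and only if $v$ \emph{decides} $x$, meaning there is $a\in v$ with $a\subseteq x$ or with $a\cap x=\emptyset$. The forward implication is clear, since a generated ultrafilter must contain $x$ or $\w\setminus x$; the converse holds because once $v$ decides $x$ and decides all of $\cA$, it decides every element written in normal form. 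Equivalently, $v$ \emph{fails} to generate exactly when every $a\in v$ meets both $x$ and $\w\setminus x$.

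\textbf{Step 1: $(2)\Leftrightarrow(3)$.} This is purely formal and I would settle it first. Since $u$ is an ultrafilter, $u$ and $u^*$ partition $\cA$; and $\{a:a\cap x\in\cA\}$ and $\{a:a\cap x\notin\cA\}$ partition $\cA$ as well. Thus the identity asserted in $(2)$ is precisely the complement within $\cA$ of the identity asserted in $(3)$, and the two are equivalent by taking complements. It therefore suffices to prove $(1)\Leftrightarrow(2)$.

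\textbf{Step 2: $(1)\Rightarrow(2)$.} Assume $x$ is minimal for $(\cA,u)$, so that $u$ is the \emph{unique} non-generating ultrafilter. I would prove the identity $u=\{a:a\cap x\in\cA\}$ by two inclusions. One inclusion is soft and uses only that $u$ does not generate, via the criterion of Step 0 and the ultrafilter property of $u$. The reverse inclusion is the \textbf{main obstacle} and is where uniqueness is indispensable: given $a\in\cA$ with $u\notin a^*$, every ultrafilter in the clopen set $a^*\subseteq St(\cA)$ is distinct from $u$ and hence decides $x$. The points of $a^*$ that decide $x$ positively form an open set (a union of basic clopens $b^*$ with $b\subseteq x$) whose complement inside $a^*$ is the open set of points deciding $x$ negatively; by compactness of $a^*$ this set is therefore clopen in $St(\cA)$, so by Stone duality it equals $c^*$ for a unique $c\in\cA$. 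Evaluating the identity $c^*=\{v\in a^*:v\text{ decides }x\text{ positively}\}$ at the principal ultrafilters $\{a\in\cA:n\in a\}$, $n\in\w$, identifies $c$ with $a\cap x$ pointwise, whence $a\cap x=c\in\cA$. Assembling the two inclusions gives $(2)$.

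\textbf{Step 3: $(2)\Rightarrow(1)$.} Conversely, assuming the identity in $(2)$, I would verify directly from the Step 0 criterion that $u$ does not generate, and then show every $v\neq u$ does: choosing $a\in\cA$ separating $v$ from $u$ and feeding it through $(2)$ locates $a\cap x$ or $a\setminus x$ inside $\cA$, producing a witness $b\in v$ with $b\subseteq x$ or $b\cap x=\emptyset$, so $v$ decides $x$ and generates. Hence $u$ is the unique non-generating ultrafilter, i.e.\ $x$ is minimal for $(\cA,u)$, closing the cycle. The one genuinely nontrivial point throughout is the compactness/Stone-duality step in Step 2, which converts the topological statement ``all ultrafilters but $u$ decide $x$'' into the algebraic conclusion that the relevant trace $a\cap x$ already lies in $\cA$.
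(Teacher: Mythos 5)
The paper offers nothing to compare against here: the proposition is quoted from \cite{koszmidertalgebras} and stated without proof, so your argument must stand on its own. It does not, and the root problem is one you should have caught: the statement as printed is false, because items (2) and (3) have their right-hand sides interchanged relative to Koszmider's actual lemma. Condition (2), $u=\{a\in\cA : a\cap x\in\cA\}$, can never hold for any ultrafilter: $\emptyset\cap x=\emptyset\in\cA$ puts $\emptyset$ in the right-hand side, while $\emptyset\notin u$. A concrete check: let $\cA$ be the finite/cofinite algebra, $x$ the set of even numbers, and $u$ the cofinite ultrafilter. Then $x$ is minimal for $(\cA,u)$ (every principal ultrafilter decides $x$ via a singleton, while no cofinite set is contained in $x$ or disjoint from it), yet $\{a:a\cap x\in\cA\}$ is exactly the ideal of finite sets, i.e.\ $u^*$, not $u$. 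The correct statement is $u=\{a\in\cA: a\cap x\notin\cA\}$ and $u^*=\{a\in\cA : a\cap x\in\cA\}$; this corrected reading is also the one the paper implicitly relies on in the proof of Proposition~\ref{acceptableistalgebra}.

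The concrete gap in your write-up is that the two inclusions you establish in Step 2 do not assemble into the identity you claim. Your Step 0 criterion is correct, but applying it to the non-generating ultrafilter $u$ gives: if $a\in u$ and $a\cap x\in\cA$, then either $a\cap x\in u$ (so $u$ decides $x$ positively) or $a\setminus x=a\cap(\omega\setminus(a\cap x))\in u$ (so $u$ decides $x$ negatively), a contradiction. So the ``soft'' inclusion is $u\subseteq\{a:a\cap x\notin\cA\}$ --- which refutes (2) as stated rather than proving half of it. Likewise your compactness/Stone-duality argument (which is correct, and is indeed the nontrivial point) shows $a\notin u\Rightarrow a\cap x\in\cA$, i.e.\ $\{a:a\cap x\notin\cA\}\subseteq u$. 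Assembled, these yield $u=\{a:a\cap x\notin\cA\}$: your machinery proves precisely the corrected proposition, and your Step 3 outline also goes through verbatim from that corrected identity (if $a\in u$ had $a\subseteq x$ or $a\cap x=\emptyset$ then $a\cap x\in\cA$, contradiction; and for $v\neq u$, any $a\in v\setminus u$ has $a\cap x\in\cA$, and deciding $a\cap x$ inside $v$ produces the witness). So the underlying mathematics is sound, but as a proof of the statement as written it fails at the first inclusion of Step 2; the honest fix is to flag the typo and prove the swapped version, which is what Koszmider's lemma and the paper's later applications actually require.
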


\vspace{5mm}

A Boolean algebra $\cA$ is minimally generated if it is generated by $\bigcup_{\alpha\in\beta}\cA_\alpha$, where $\cA_{\alpha+1}$ is a minimal extension over $\cA_\alpha$ and for $\alpha$ limit $\cA_\alpha$ is generated by $\bigcup_{\delta<\alpha}A_\delta$. In this paper we will moreover assume that $\cA_0=[\w]^\w$.

\vspace{3mm}

We now describe a powerful tool for building minimally generated Boolean algebras introduced by Koszmider \cite{koszmidertalgebras}. 
Recall that $(T,\leq)$ is tree if it is a partially ordered set where $\{t\in T:t\leq s\}$ is well ordered for every $s\in T$. 
We will work with subtrees of the form $(2^{<\kappa},\subseteq)$. 
A subtree $T\subseteq 2^{<\kappa}$ is an \emph{acceptable tree} if 
\begin{enumerate}
    \item the length of $t$, denoted by $l(t)$, is a successor ordinal for every $t\in T$,
    \item $t\rest(\alpha+1)\in T$ whenever $t\rest(\alpha+1)\subseteq s\in T$ and
    \item $t\conc 0\in T$ if and only if $t\conc 1\in T$ for every $t\in 2^{<\kappa}$.
\end{enumerate}

A branch $b$ in $T$ is a maximal chain in $T$.

\begin{dfn}[\cite{koszmidertalgebras}]
    Given an acceptable tree $T\subseteq2^{<\kappa}$, we say that $\cA$ is a $T$-algebra if $\cA=\langle a_t:t\in T\rangle$ where 
    \begin{enumerate}
        \item each $a_t\in[\w]^\w$,
        \item $a_{t\conc1}=\w\setminus a_{t\conc0}$ for every $t\conc i\in T$ and,
        \item for every $t\in T$, the family $\{a_s:s<t\}$ is centered and $a_t$ is minimal for $(A_t,u_t)$ (where $A_t$ and $u_t$ are the Boolean algebra and the ultrafilter generated by $\{a_s:s<t\}$ respectively).
    \end{enumerate}
\end{dfn}

\vspace{5mm}

We will say that $\cA$ is a $\cT$-algebra if it is a $T$-algebra for some acceptable tree $T$.
$\cT$-algebras are minimally generated and if $\cA$ is a $T$-algebra, then the elements of $St(\cA)$ correspond to maximal chains through $T$ \cite{koszmidertalgebras}. Moreover, if $u\in St(\cA)$ and $b_u$ is the associated branch, then $\{a_t^*:t\in b_u\}$ forms a local base for $u$ in $St(\cA)$. We will keep this notation and use $u_b$ to refer to the ultrafilter $u$ generated by a branch $b$ in a $\cT$-algebra and vice versa, we will use $b_u$ to refer to a branch defining an ultrafilter $u$.\\
To simplify our notation, we slightly modify our definition and assume that $\emptyset\in T$ and $a_\emptyset=\w$ for every acceptable tree $T$.

\vspace{5mm}

We also recall that the \emph{pseudointersection number} $\p$, is defined as the minimal size of a centered family with no infinite pseudointersection.  
We define a labeling $\{a_t:t\in 2^{<\p}\}\subseteq[\w]^\w$ of $2^{<\p}$ for the rest of this section as follows: Let $a_\emptyset=\w$ and assume that for $s\in2^{<\p}$, the family $\{a_t:t<s\}\subseteq[\w]^\w$ has the finite intersection property. Then we can find a pseudointersection $P_s\in[\w]^\w$ for this family. Let $P_s'\in[P_s]^\w$ be such that $|P_s\setminus P'_s|=\w$. 
If $l(s)$ is a limit ordinal define $a_s=P_s$. Otherwise, if $s=t\conc i$, define $a_{t\conc0}=P_s'$ and $a_{t\conc1}=\w\setminus P'_s$.

It is clear from the construction that for any $s\in2^{<\p}$ the family $\{a_t:t<s\}$ has the finite intersection property and that $a_{s\conc i}$ and $a_{s\conc (1-i)}$ are complements.

\begin{proposition}\label{acceptableistalgebra}
    If $T\subseteq 2^{<\p}$ is an acceptable subtree and the $a_t$ are as described above, then $\{a_t:t\in T\}$ generates a $T$-algebra.
\end{proposition}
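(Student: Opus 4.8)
The plan is to verify that $\{a_t : t\in T\}$ satisfies the three conditions in the definition of a $T$-algebra, taking the labeling $\{a_t : t\in 2^{<\p}\}$ fixed just above. Conditions (1) and (2) are immediate from the construction: each $a_t$ was chosen in $[\w]^\w$, and for a successor node $t=s\conc i$ we explicitly set $a_{s\conc 0}=P'_s$ and $a_{s\conc 1}=\w\setminus P'_s$, so these are complements. Since $T$ is acceptable, clause (3) of acceptability guarantees $t\conc 0\in T$ iff $t\conc 1\in T$, so condition (2) of being a $T$-algebra is meaningful on exactly the nodes where it must hold. The only real work is condition (3): for every $t\in T$, the family $\{a_s : s<t\}$ is centered and $a_t$ is minimal for $(A_t,u_t)$.

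First I would establish centeredness. The construction guarantees that for any $s\in 2^{<\p}$ the family $\{a_t : t<s\}$ has the finite intersection property (this is noted at the end of the labeling construction, since at each stage $P_s$ is chosen as a pseudointersection of all earlier labels, which exists because $\{a_t:t<s\}$ is a centered family of size $<\p$). Restricting to $t\in T$ only shrinks the family, so centeredness of $\{a_s : s<t\}$ follows, and $u_t$ is a genuine ultrafilter on $A_t$. The heart of the argument is the minimality of $a_t$ for $(A_t,u_t)$. Here I would split into cases according to the length of $t$. If $l(t)$ is a limit ordinal, then $a_t=P_t$ is a pseudointersection of $\{a_s : s<t\}$, so $a_t\subseteq^* a$ for every $a\in u_t$; I would then invoke Proposition \ref{characterizationminimality}, checking condition (2) there, namely that $u_t=\{a\in A_t : a\cap a_t\in A_t\}$. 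Since $a_t$ is almost contained in every member of $u_t$ and almost disjoint from the complement of every such member, for $a\in u_t$ we get $a\cap a_t=^* a_t$, which is cofinite in $a_t$ and hence in $A_t$ (finite sets are in $A_t$), while for $a\in u_t^*$ we get $a\cap a_t$ finite; this gives the two directions of (2).

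For $t=s\conc i$ a successor node, $a_t$ is either $P'_s$ or its complement, where $P'_s$ is an infinite-coinfinite subset of the pseudointersection $P_s$ of $\{a_r : r<s\}$. By Lemma \ref{Lemmakoszmider} it suffices to treat $a_t=P'_s$. Again $P'_s\subseteq P_s\subseteq^* a$ for every $a\in u_t$, so the same computation via Proposition \ref{characterizationminimality}(2) applies verbatim. I expect the main subtlety to be bookkeeping rather than a genuine obstacle: one must be careful that $u_t$ is computed relative to $A_t$, the algebra generated by the $T$-predecessors of $t$, which in an acceptable subtree may be a proper subtree of the full $2^{<\p}$; however, the pseudointersection property is inherited downward, so $a_t$ remains almost-contained in every element of the (possibly smaller) ultrafilter $u_t$, and the minimality characterization goes through. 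Finally I would note $a_t\notin A_t$: since $a_t$ is both infinite and coinfinite relative to the pseudointersection and splits it, it cannot lie in $A_t$ (otherwise $u_t$ would decide it, contradicting that $a_t\cap a$ is infinite for $a\in u_t$ while $a_t$ has infinite complement in $P_s$), which is exactly what is needed for $a_t$ to be minimal rather than already present.
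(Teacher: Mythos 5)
Your overall route is the same as the paper's: reduce to nodes of successor length, use Lemma \ref{Lemmakoszmider} to pass to $a_{s\conc 0}=P'_s$, and derive minimality from the fact that $a_t$ is (almost) a pseudointersection of $\{a_r:r<t\}$ together with Proposition \ref{characterizationminimality}. (The limit-length case you treat separately is vacuous: by clause (1) of acceptability every node of $T$ has successor length, which is exactly why the paper only considers $t=s\conc i$.) However, the one step that carries the whole proof --- the verification of the characterization of minimality --- is wrong as written. You claim that for $a\in u_t$ the set $a\cap a_t=^*a_t$ ``is cofinite in $a_t$ and hence in $A_t$''. Being a cofinite subset of $a_t$ does not put a set in $\cA_t$: since $\cA_t$ contains all finite sets and is closed under Boolean operations, membership in $\cA_t$ is invariant under finite modification, so $a\cap a_t=^*a_t\notin\cA_t$ precisely because $a_t\notin\cA_t$. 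Indeed, your two computations taken together ($a\cap a_t\in\cA_t$ for $a\in u_t$, and $a\cap a_t$ finite, hence in $\cA_t$, for $a\in u_t^*$) would give $a\cap a_t\in\cA_t$ for every $a\in\cA_t$; taking $a=\w$ this yields $a_t\in\cA_t$, contradicting the final paragraph of your own argument.

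The computation you performed is the right one; only the conclusion drawn from it is inverted. What it actually shows is $u_t=\{a\in\cA_t:a\cap a_t\notin\cA_t\}$ and $u_t^*=\{a\in\cA_t:a\cap a_t\in\cA_t\}$, and that is the correct characterization of minimality (test it on the finite--cofinite algebra with $u$ the Fr\'echet ultrafilter: $a\cap x$ is \emph{new} exactly when $a$ is cofinite, i.e.\ when $a\in u$). As literally printed, conditions (2) and (3) of Proposition \ref{characterizationminimality} appear to have $u$ and $u^*$ interchanged, which is presumably what led you astray; the paper's own proof sidesteps the issue by simply recording that $a_r\setminus a_t$ is finite for all $r<t$ and citing the proposition. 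So the gap is repairable in one line, but as it stands the central step of your argument asserts a false membership statement and is internally inconsistent with your closing observation that $a_t\notin\cA_t$.
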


\begin{proof}
    We only need to check that for $t\in T$, $a_t$ is minimal for $(A_t,u_t)$, where $A_t$ and $u_t$ are as in Definition \ref{Lemmakoszmider}. 
    
    Since the length of any node in $T$ is a successor ordinal, there are $s\in T$ and $i\in2$ such that $t=s\conc i$. By Lemma \ref{Lemmakoszmider}, it suffices to show that $a_{s\conc0}$ is minimal over $(\cA_t,u_t)$.

    Indeed, $a_t=a_{s\conc0}$ is a pseudointersection of $\{a_r:r<t\}$, i.e., $a_t\subseteq^* a_r$ for every $r<t$. That implies $a_r\setminus a_t\in[\w]^{<\w}\subseteq\cA_t$. By Proposition \ref{characterizationminimality}, $a_t$ is minimal for $(\cA_t,u_t)$.
\end{proof}

Before we prove the main result of this section, we will introduce a bit more of notation. If $X$ is a $T$-algebra and $b$ is a branch of $T$, we define $X_b$ as the $T'$-algebra space, where $T'$ is the minimal acceptable subtree of $T$ in which $b$ is a branch (here we are assuming $T$ and $T'$ have the same labeling $a_t$). Hence $X_b=\{z_\alpha:\alpha<l(b)\ \land\ \alpha \textnormal{ is a successor}\}\cup\{u_b\}$, where:
\begin{itemize}
    \item $u_b$ is the ultrafilter generated by $\{a_{b\rest\alpha}:\alpha<l(b)\}$ and
    \item each $z_\alpha$ is the ultrafilter generated by $\{a_{b\rest\beta}:\beta<\alpha\}\cup\{\w\setminus a_{b\rest\alpha}\}$
\end{itemize}
in the Boolean algebra $\langle a_{b\rest\alpha}:\alpha<l(b)\rangle$.

Given two different branches $b,p$ in $T$, define $b\land p=\min\{\alpha:b(\alpha)\neq p(\alpha)\}$. Following \cite{bellapseudoradial}, if $X$is a $T$-algebra and $b$ is a branch of $T$, we use $\pi_b:X\to X_b$ to denote the natural continuous projection such that $\pi_b(u_b)=u_b$ and $\pi_b(u_q)=z_\alpha$ if $\alpha=b\land q$. We will make use of the following result.

\begin{lemma}\cite{Dowcompactcclosed}\label{lemmaDow}
    Let $X$ be a $\cT$-algebra, $u_b\in X$ and $\{u_n:n\in\w\}\subseteq X\setminus\{u_b\}$. Then $u_n\to u_b$ if and only if $\pi_b(u_n)\to\pi_b(u_b)$ in $X_b$.\qed 
\end{lemma}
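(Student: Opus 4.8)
The plan is to reduce the statement to the single observation that $\pi_b$ is the Stone dual of the inclusion of Boolean algebras $\cA'\subseteq\cA$, where $\cA$ is the $T$-algebra with $X=St(\cA)$ and $\cA'=\langle a_t:t\in T'\rangle$ is the $T'$-algebra with $X_b=St(\cA')$. First I would verify that for every ultrafilter $u\in X$ one has $\pi_b(u)=u\cap\cA'$. The two defining clauses of $\pi_b$ are exactly what restriction to $\cA'$ yields: $\pi_b(u_b)=u_b$ is clear, and for $q$ with $b\wedge q=\alpha$ the ultrafilter $u_q$ contains $a_{b\rest\beta}=a_{q\rest\beta}$ for all $\beta<\alpha$ together with $\w\setminus a_{b\rest\alpha}$, and these are precisely the generators of $z_\alpha$, so $u_q\cap\cA'=z_\alpha=\pi_b(u_q)$. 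In particular this re-proves that $\pi_b$ is continuous, which already gives the forward implication.

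Next I would invoke the cited description of local bases in a $\cT$-algebra: $\{a_t^*:t\in b\}$ is a local base at $u_b$ in $X=St(\cA)$. Applying the same fact to the $T'$-algebra (note that $b$ is also a branch of $T'$, carrying the same labels $a_t$), the traces $\{v\in X_b:a_t\in v\}$ for $t\in b$ form a local base at $u_b$ in $X_b$. The crux is then a membership equivalence: each such $a_t$ (with $t\in b$) lies in $\cA'$, so for any $u\in X$ we have $a_t\in u$ if and only if $a_t\in u\cap\cA'=\pi_b(u)$. Equivalently, $u\in a_t^*$ in $X$ holds if and only if $\pi_b(u)$ lies in the corresponding basic neighborhood of $u_b$ in $X_b$.

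Putting these together, for each fixed $t\in b$ and each $n$, the condition $u_n\in a_t^*$ holds exactly when $\pi_b(u_n)$ lies in the corresponding neighborhood in $X_b$. Since convergence to a point is equivalent to being eventually inside every member of a local base (no nestedness of the base is needed), the statement ``for every $t\in b$, $u_n\in a_t^*$ for all sufficiently large $n$'' is literally identical to ``for every $t\in b$, $\pi_b(u_n)$ lies in the corresponding neighborhood for all sufficiently large $n$''. The former expresses $u_n\to u_b$ in $X$ and the latter expresses $\pi_b(u_n)\to\pi_b(u_b)$ in $X_b$, so the two convergences coincide, yielding both implications at once.

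The only genuinely delicate part is the bookkeeping behind $\pi_b(u)=u\cap\cA'$ and the verification that the branch $b$ determines the same basic neighborhoods in $St(\cA')$ as in $St(\cA)$ under the common labeling; once these are settled the equivalence is immediate, and no appeal to the finer minimality structure of the generators (Proposition \ref{characterizationminimality}) is required beyond what is already packaged into the cited local-base fact. I would therefore devote the bulk of the write-up to confirming the identity $\pi_b(u_q)=u_q\cap\cA'=z_{b\wedge q}$ and the matching of the two local bases.
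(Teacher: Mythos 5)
The paper does not actually prove this lemma --- it is quoted from \cite{Dowcompactcclosed} and stamped with a \qed --- so there is no in-paper argument to compare against. Your proof is correct and essentially the canonical one: identifying $\pi_b$ with the Stone dual of the inclusion $\cA'\subseteq\cA$ (i.e.\ $\pi_b(u)=u\cap\cA'$), noting that the local base at $u_b$ in $X=St(\cA)$ and the local base at $u_b$ in $X_b=St(\cA')$ are indexed by the same nodes $t\in b$ with the same labels $a_t\in\cA'$, and observing that $a_t\in u_n$ iff $a_t\in\pi_b(u_n)$, which makes the two convergence statements literally identical. Two small points of bookkeeping you should nail down in the write-up. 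First, to know that $u_q\cap\cA'$ is one of the listed points $z_\alpha$ (rather than some other ultrafilter on $\cA'$), you need that $\cA'$ is genuinely a $T'$-algebra whose ultrafilters correspond to maximal chains of $T'$; this holds because $T'$ is downward closed in $T$ (acceptability condition (2)), so for $t\in T'$ the algebra generated by $\{a_s:s<t,\ s\in T'\}$ equals $A_t$ and minimality of $a_t$ transfers --- worth one sentence. Second, with the paper's convention $b\land q=\min\{\alpha:b(\alpha)\neq q(\alpha)\}$, the ultrafilter $u_q$ contains $a_{b\rest\alpha}=a_{q\rest\alpha}$ and $\w\setminus a_{b\rest(\alpha+1)}$, so $u_q\cap\cA'$ is the $z$ indexed one level higher than you (and the paper) write; this off-by-one is inherited from the paper's notation and is harmless to the argument, but you should fix the index in your verification of $\pi_b(u_q)=u_q\cap\cA'$. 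With those adjustments the proof is complete; as you note, Proposition \ref{characterizationminimality} is not needed beyond what is packaged into the branch/local-base facts the paper already records.
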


    Given a branch $b$ of $T$ and $\gamma\leq l(b)$, let also $X_b^\alpha$ be the truncation of $X_b$ to the $\alpha$-th level. That is, if 
    $$T'=\{t\conc i\in T:t\conc i\subseteq b\land l(t\conc i)<\alpha\}\cup\{t\conc (1-i)\in T:t\conc i\subseteq b\land l(t\conc i)<\alpha\}$$
    then $X_b^\alpha$ is the $T'$-algebra generated by $\{a_t:t\in T'\}$. In particular, $X_b=X_b^{l(b)}$. If $\beta<\alpha$, we will use $z_\beta$ to denote the ultrafilter generated by $\{a_{b\rest\eta}:\eta<\beta\}\cup\{\w\setminus a_{b\rest\beta}\}$ in both $X_b$ and $X_b^\alpha$.\\

We are now ready to proof the main result of the section.

\begin{theorem}\label{sequentiallycompacttalgebra}
    If $T=\{s\in2^{<\p}:l(s)\textnormal{ is a successor ordinal}\}$, and $a_t$ are as described above, then the Stone space of the $T$-algebra generated by $\{a_t:t\in T\}$ is sequentially compact.
\end{theorem}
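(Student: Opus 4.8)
The plan is to prove sequential compactness by taking an arbitrary sequence $\{u_n : n \in \omega\}$ of ultrafilters in $St(\cA)$ and producing a convergent subsequence, working with the branches $b_{u_n}$ through the tree $T = 2^{<\p}$ (restricted to successor levels). Since $\p$ is the relevant cardinal and a convergent sequence has a limit whose neighborhood base is indexed along a single branch, the strategy is to find a limit branch $b$ and arrange, by passing to a subsequence, that the projections $\pi_b(u_n)$ converge to $\pi_b(u_b) = u_b$ in the simpler space $X_b$; then Lemma \ref{lemmaDow} upgrades this to convergence $u_n \to u_b$ in the full space.

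First I would consider the splitting behavior of the branches at each level. For each $\alpha < \p$ and each $n$, the branch $b_{u_n}$ takes a value $b_{u_n}(\alpha) \in \{0,1\}$ wherever it is defined. The idea is to build the limit branch $b$ by recursion on $\alpha < \p$, simultaneously thinning the sequence. At a successor stage $\alpha$, the set $\{a_{b\rest\alpha}\}$ and its complement split $\omega$; since the labeling was chosen so that $\{a_t : t < s\}$ always has the finite intersection property and $a_{s\conc 0}$ is a pseudointersection of the earlier $a_r$'s, I can decide the value $b(\alpha)$ to which infinitely many of the surviving $u_n$ agree (or detect that the $u_n$ have already stabilized below level $\alpha$, i.e. share a common branch, which lands us essentially inside a single $X_b$ and is handled by the Franklin-type argument). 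Passing to this infinite subsequence and iterating, I use that $\p < \cf(\p)$-type bookkeeping — more precisely the fact that a centered family of size $<\p$ has a pseudointersection — to keep the construction coherent through limit stages and to guarantee the resulting $\{a_{b\rest\alpha} : \alpha < l(b)\}$ generates a genuine ultrafilter $u_b \in St(\cA)$.

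The key reduction is that, having fixed the limit branch $b$, convergence need only be checked in $X_b$ via Lemma \ref{lemmaDow}. In $X_b$ the point $u_b$ has neighborhood base $\{a_{b\rest\alpha}^* : \alpha < l(b)\}$, so I must show the thinned subsequence is eventually inside each such basic neighborhood; equivalently, for each $\alpha$, all but finitely many surviving $u_n$ have $a_{b\rest\alpha} \in u_n$. But this is exactly what the level-by-level thinning arranges: $\pi_b(u_n) = z_{\beta_n}$ where $\beta_n = b \land b_{u_n}$ is the splitting level, and forcing agreement up to higher and higher levels sends $\beta_n \to l(b)$, which is precisely convergence of $z_{\beta_n}$ to $u_b$ in $X_b$. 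I would also separately note the diagonal case where no single limit branch captures infinitely many $u_n$, handling it by the same splitting recursion — this is subsumed once one observes that at each level only two branch-directions are available, so the tree of possibilities is finitely branching and an infinite subsequence must concentrate.

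The main obstacle I anticipate is the limit-stage bookkeeping: ensuring that when one thins the sequence $\aleph_0$ many times across the stages $\alpha < \p$, one still retains a single infinite subsequence at the end and that the accumulated branch $b$ has the correct length $l(b)$ with $\{a_{b\rest\alpha} : \alpha < l(b)\}$ centered (so that $u_b$ is a legitimate ultrafilter of $\cA$ and a point of $St(\cA)$). Since $\p$ is uncountable and the recursion has cofinally many limit stages, one cannot naively thin at each of $\p$-many steps; the honest version must thin only countably often and use the $\p$-centeredness of the labeling to pick out the tail behavior, likely by a fusion-style or rank argument that identifies the eventual splitting level of the sequence and reads off $b$ from it. Getting this diagonalization to respect the $\subseteq^*$-structure of the pseudointersections $P_s$ is where the real work lies.
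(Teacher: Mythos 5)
Your high-level frame (pick a limit branch $b$, reduce to the quotient $X_b$ via Lemma \ref{lemmaDow}) matches the paper, but the two places where you defer the work are exactly where the proof lives, and the mechanism you describe for convergence in $X_b$ is not the right one. First, you correctly observe that one cannot thin the sequence at each of the $\p$-many levels, but the ``fusion-style or rank argument'' you invoke to fix this is the missing content, not a routine repair. Second, and more seriously, your stated convergence criterion --- that the splitting levels $\beta_n=b\land b_{u_n}$ tend to $l(b)$ --- cannot occur: $l(b)=\p$ has uncountable cofinality, so a countable set of splitting levels can never be cofinal in it. Convergence of $z_n=\pi_b(u_n)$ to $u_b$ in $X_b$ does \emph{not} require $\beta_n\to l(b)$; it requires that each basic neighborhood $a_{b\rest\alpha}^*$ eventually contains the $z_n$, and for $\alpha$ above all the $\beta_n$ this must happen for a structural reason, not by further agreement of branches. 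Your picture would force the limit point to sit at a level of countable cofinality, which is not what happens here.

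The paper's resolution is the opposite of ``forcing agreement'': one builds a Cantor scheme $\{t_s,Y_s:s\in 2^{<\w}\}$ with only countably many thinnings, at each node using compactness to find a \emph{minimal} $t'\in T$ whose clopen set $a_{t'}^*$ splits the current subsequence, and then chooses $\varphi\in 2^\w$ so that the branch $q=\sup_n t_{\varphi\rest n}$ \emph{avoids} every $b_{u_n}$ below its length $\gamma=l(q)$ (possible since the scheme has continuum many branches and the one-to-one sequence contributes only countably many $b_{u_n}$). A single pseudointersection $Y_\varphi$ of the countably many $Y_{\varphi\rest n}$ gives the subsequence, and $b$ is obtained by extending $q$ with $1$'s up to length $\p$. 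The point of appending $1$'s is that $a_{(b\rest\alpha)\conc 0}$ is by construction a pseudointersection of all earlier generators, while each $z_n$ already contains some $a_{(t_{\varphi\rest m})\conc(1-i)}$ almost disjoint from it; hence $a_{(b\rest\alpha)\conc 1}=\w\setminus a_{(b\rest\alpha)\conc 0}$ lies in \emph{every} $z_n$, so all levels $\alpha\geq\gamma$ contribute nothing to check and the transfinite induction goes through with no further thinning. This is precisely where the specific labeling by pseudointersections (and the hypothesis that the tree has height $<\p$, so such pseudointersections exist at every stage) is used; your proposal never isolates this step, and without it the argument does not close.
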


\begin{proof}
    Let $\cB$ be the Boolean algebra generated by $\{a_t:t\in T\}$ and let us denote by $X$ its Stone space $St(\cB)$. 
    Let $\{u_n:n\in\w\}\subseteq X$ be a one-to-one sequence. 
    We recursively construct a Cantor scheme $\{t_s:s\in2^{<\w}\}\subseteq T$ and $\{Y_s:s\in2^{<\w}\}\subseteq[\w]^\w$ such that for every $s\in2^{<\w}$:
    \begin{enumerate}
        \item $t_{s\conc i}>t_s$ for $i\in2$.
        \item There is $r_s\in T$ such that $t_{s\conc i}=r_s\conc i$ for $i\in2$.
        \item $Y_s=Y_{s\conc 0}\sqcup Y_{s\conc 1}$ (where $\sqcup$ denotes disjoint union).
        \item For $t\in T$, if $t\leq t_s$ then $\{n\in Y_s:u_n\in a_t^*\}$ is cofinite.
    \end{enumerate}

    \vspace{3mm}
     
     Let us start by defining $t_\emptyset=\emptyset$ and $Y_\emptyset=\w$. Now assume we have defined $t_s$ and $Y_s$ satisfying (1)-(4). If $\{u_n:n\in Y_s\}$ is a convergent sequence, we halt the construction since we have already found a convergent subsequence for $\{u_n:n\in\w\}$. 
     So we can assume without loss of generality that $\{u_n:n\in Y_s\}$ is not a convergent sequence. 
     As $X$ is compact, we can find $t'\in T$ such that the clopen set $a_{t'}^*$ splits $\{u_n:n\in Y_s\}$, i.e., both sets $\{n\in Y_s:u_n\in a_{t'}^*\}$ and $\{n\in Y_s:u_n\notin a_{t'}^*\}$ are infinite. 
     We can moreover take $t'$ of minimal possible length. 
     As $t'\in T$ we have that $t'=r\conc j$ for some $r\in T$ and $j\in 2$. Define $t_{s\conc i}=r\conc i$, so that $(2)$ holds.

     From the fact that $\cB$ is a $\cT$-algebra, we get that $a_{t_{s\conc i}}=\w\setminus a_{t_{s\conc (1-i)}}$, hence by defining $Y_{s\conc i}=\{n\in Y_s:u_n\in a_{t_{s\conc i}}^*\}$ for $i<2$ we conclude that $Y_{s\conc i}\in[\w]^\w$ and $Y_s=Y_{s\conc 0}\sqcup Y_{s\conc 1}$, hence $(3)$ holds. 

     That $(1)$ holds follows directly from our inductive hypothesis $(4)$ and $(4)$ holds from the fact that $Y_{s\conc i}\subseteq Y$ and from the minimality of $t'$.

     \vspace{3mm}

    For every $\varphi\in2^\w$ let $Y_\varphi$ be a pseudointersection of $\{Y_{\varphi\rest n}:n\in\w\}$. Since $u_n:n\in\w$ is one-to-one, we can also fix $\varphi\in2^\w$ such that if $q=\sup\{t_{\varphi\rest n}:n\in\w\}$ and $\gamma=l(q)$, then $l(b_{u_n}\land q)<\gamma$ for every $n\in\w$. 
    Let $b\in 2^\p$ be defined by
    $$b(\alpha)=\begin{cases}
    q(\alpha)    & \textnormal{ if }\ \ \alpha<l(q) \\
    1    & \textnormal{ if }\ \ \alpha\geq l(q)
    \end{cases}$$

    Our goal is to show that $\{u_n:n\in Y_\varphi\}$ converges to $u_b$. In order to do this we will use Lemma \ref{lemmaDow}. To simplify our notation let $b_n=b_{u_n}$ and $z_n=\pi_b(u_n)$ for every $n\in Y_\varphi$. We will show by induction that $\{z_n:n\in Y_\varphi\}$ converges to $u_{b\rest\alpha}$ in $X_b^\alpha$ for every $\gamma\leq\alpha\leq\p$.
    
    For $\alpha=\gamma=l(q)$, $(1)$, $(4)$ and the fact that $\{a_\sigma^*:\sigma\in q\}$ is a local base for the ultrafilter $u_q\in X_b^\gamma$ imply that $\{z_n:n\in Y_\varphi\}$ converges to $u_q=u_{b\rest\gamma}$.
    
    A similar argument shows that if this holds for every $\gamma\leq\alpha<\beta$ and $\beta$ is limit, then it also holds for $\beta$, as a typical element in the local base of $u_{b\rest\beta}$ is the intersection of finitely many sets of the form $a_{b\rest \lambda}^*$ with $\lambda<\beta$.

    Assume then that $z_n\to u_{b\rest\alpha}$ for some $\gamma\leq\alpha<\p$. 
    Notice that $a=a_{b\rest\alpha\conc0}$ is a pseudointersection of  $\{a_{b\rest \beta+1}:\beta<\alpha\}$. 
    In particular, for every $n\in Y_\varphi$, if $({t_{\varphi\rest n})\conc i}\subseteq b\rest\alpha$, then $a\subseteq^* a_{({t_{\varphi\rest n}})\conc i}$. On the other hand, $\w\setminus a_{{(t_{\varphi\rest n})}\conc i}=a_{({t_{\varphi\rest n}})\conc (1-i)}\in z_n$, so $a\not\in z_n$, which implies that $a_{{(b\rest\alpha)}\conc1}=\w\setminus a\in z_n$. That is, the only new element in the base filter for $u_{b\rest(\alpha+1)}$ (namely $a_{(b\rest\alpha)\conc1}$) with respect to the family generating $u_{b\rest\alpha}$ belongs to every ultrafilter $z_n$. Hence it is irrelevant for the convergence of the given sequence and therefore $z_n\to p\rest(\alpha+1)$.\\

    By induction we get that $z_n\to u_{b\rest\p}=u_b$ in $X_b^\p=X_b$, and by Lemma \ref{lemmaDow}, we get that $\{u_n:n\in Y_\varphi\}$ converges to $u_b$ in $X$. Therefore $X$ is sequentially compact.
\end{proof}

\begin{cor}
    If $2^\p>\c$, there is a sequentially compact separable space of cardinality $>\c$. In particular, it holds under $\p=\c$\qed
\end{cor}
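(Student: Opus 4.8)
The plan is to take for $X$ the space already produced by Theorem \ref{sequentiallycompacttalgebra}, namely $X=St(\cB)$ where $\cB$ is the $T$-algebra generated by $\{a_t:t\in T\}$ for $T=\{s\in 2^{<\p}:l(s)\text{ is a successor ordinal}\}$, and simply verify that this single space has all three required features. Two of them are essentially free: $X$ is sequentially compact by Theorem \ref{sequentiallycompacttalgebra}, and $X$ is a separable compact Hausdorff space because $\cB$ is a subalgebra of $\cP(\w)$ containing all finite sets (as noted after the definition of $St(\cA)$), the countably many principal ultrafilters being a dense subset. So the entire content of the corollary reduces to computing $|X|$ and checking it exceeds $\c$.

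The one computation I would carry out is $|X|=2^\p$. Since $\cB$ is a $\cT$-algebra, the points of $St(\cB)$ correspond to the branches (maximal chains) of $T$, so it suffices to count these. I would show that $\varphi\mapsto b_\varphi:=\{\varphi\rest\alpha:\alpha<\p\text{ and }\alpha\text{ is a successor}\}$ is a bijection from $2^\p$ onto the branches of $T$. It is injective because $\varphi=\bigcup b_\varphi$. For surjectivity I would take an arbitrary branch $b$, set $\varphi=\bigcup b$, and argue that $\mathrm{dom}(\varphi)=\p$: if instead $\mathrm{dom}(\varphi)=\delta<\p$, then since $\p$ is a limit ordinal $\delta+1<\p$, so $\varphi\conc 0\in T$ has successor length and properly extends every node of $b$, contradicting maximality; thus $\varphi\in 2^\p$ and $b=b_\varphi$. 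The same observation gives maximality of each $b_\varphi$: any would-be extension is either an initial segment of $\varphi$ of limit length (excluded from $T$) or a node of length $\geq\p$ (outside $2^{<\p}$). Hence the branches biject with $2^\p$ and $|X|=2^\p$.

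With $|X|=2^\p$ in hand, the corollary is immediate: under the hypothesis $2^\p>\c$ the space $X$ is separable, sequentially compact, and of cardinality $2^\p>\c$; and under $\p=\c$ the hypothesis holds automatically since $2^\p=2^\c>\c$ by Cantor's theorem. I expect the only step needing genuine care to be the branch count, and within it the maximality verification, where the restriction to successor-length nodes combined with $\p$ being a limit ordinal is exactly what forces every branch to have full length $\p$ and rules out spurious shorter maximal chains; the remaining assertions are bookkeeping that follows directly from the structural facts about $\cT$-algebras recalled earlier in the section.
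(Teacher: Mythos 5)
Your proposal is correct and matches the paper's intent exactly: the corollary is left as immediate from Theorem \ref{sequentiallycompacttalgebra}, with the only content being the observation that the Stone space there is separable (as noted for all the Boolean algebras considered) and has $2^\p$ many points since its ultrafilters correspond to the branches of $T$, which you count correctly. Your careful verification that every maximal chain of successor-length nodes has full length $\p$ is precisely the bookkeeping the paper omits.
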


The same idea as the used in Theorem \ref{sequentiallycompacttalgebra} actually shows a bit more: if we take any acceptable subtree $T$ and repeat the proof, either we find a maximal branch in $T$ given by some $\varphi\in2^\w$, or we can repeat the argument adding 1's until we define a branch (not necessarily of length $\p$). In any case, if $q$ is this branch, the subsequence $\{u_n:n\in Y_\varphi\}$ converges to $u_q$. 

\begin{cor}
    If $T\subseteq2^{<\p}$ is any acceptable tree and the $a_t$ are defined as above, then the $T$-algebra generated by $\{a_t:t\in T\}$ is sequentially compact.\qed
\end{cor}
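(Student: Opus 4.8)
The plan is to rerun the Cantor-scheme argument from the proof of Theorem \ref{sequentiallycompacttalgebra} essentially verbatim, observing that every step survives the passage from the full successor-length tree to an arbitrary acceptable subtree $T$, with a single genuinely new ingredient: the limit point must now be taken to be a maximal branch of $T$ rather than the canonical length-$\p$ branch obtained by appending $1$'s. So let $\cB$ be the $T$-algebra, $X=St(\cB)$, and fix a one-to-one sequence $\{u_n:n\in\w\}\subseteq X$. First I would build the Cantor scheme $\{t_s:s\in2^{<\w}\}\subseteq T$ and $\{Y_s:s\in2^{<\w}\}\subseteq[\w]^\w$ with properties (1)--(4) exactly as before; the only thing to verify is that the construction stays inside $T$. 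At each stage one uses compactness of $X$ to find a node $t'\in T$ of minimal length whose clopen set splits $\{u_n:n\in Y_s\}$, writes $t'=r\conc j$ with $r\in T$, and sets $t_{s\conc i}=r\conc i$. Acceptability condition (3) guarantees $r\conc 0\in T\iff r\conc1\in T$, so both children indeed lie in $T$ and the scheme is legal. If the construction ever halts because some $\{u_n:n\in Y_s\}$ converges, we are done.

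Next, for each $\varphi\in2^\w$ I would choose a pseudointersection $Y_\varphi$ of $\{Y_{\varphi\rest n}:n\in\w\}$, and, using that the $u_n$ are one-to-one, fix $\varphi$ so that $q=\sup_n t_{\varphi\rest n}$ diverges from every branch $b_{u_n}$ strictly below $\gamma=l(q)$, i.e.\ $l(b_{u_n}\land q)<\gamma$. This diagonalization is identical to the one in Theorem \ref{sequentiallycompacttalgebra} and makes no use of the shape of $T$.

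The one new step is the choice of limit point. Rather than appending $1$'s up to length $\p$, I would extend $q$ to a maximal branch $b$ of $T$: put $b(\alpha)=q(\alpha)$ for $\alpha<\gamma$, and for $\alpha\geq\gamma$, so long as $b\rest\alpha$ still has a proper extension in $T$, invoke acceptability (3) to get $(b\rest\alpha)\conc1\in T$ and set $b(\alpha)=1$, stopping once $b$ is a maximal chain of $T$. Then $b$ is a maximal branch (possibly of length $<\p$), so $u_b\in X$, and since $q$ --- hence $b$ --- diverges from each $b_{u_n}$ below $\gamma$, we have $u_b\neq u_n$ for all $n\in Y_\varphi$. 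With $b$ in hand I would run the same transfinite induction on $\gamma\leq\alpha\leq l(b)$ showing $z_n:=\pi_b(u_n)\to u_{b\rest\alpha}$ in $X_b^\alpha$: the base case $\alpha=\gamma$ comes from properties (1) and (4) together with the fact that $\{a_\sigma^*:\sigma\in q\}$ is a local base at $u_q$; limit stages follow because local-base elements are finite intersections of earlier ones; and the successor stage $\alpha\to\alpha+1$ uses that $a_{(b\rest\alpha)\conc0}$ is the pseudointersection label, so its complement $a_{(b\rest\alpha)\conc1}$ --- the new generator, present precisely because we set $b(\alpha)=1$ --- already belongs to each $z_n$ and is therefore irrelevant to convergence. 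At $\alpha=l(b)$ this yields $z_n\to u_b$ in $X_b$, and Lemma \ref{lemmaDow} upgrades it to $u_n\to u_b$ in $X$.

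The main obstacle is precisely this last construction: one must be sure that appending $1$'s keeps the nodes inside $T$ and terminates at a genuine maximal branch, so that $u_b$ really is a point of $X$, and that the successor step of the induction remains available at every level of the extension --- all of which rest on acceptability condition (3). One should also dispatch the degenerate case in which $q$ is already maximal in $T$: then $b=q$, no extension is needed, and the base case of the induction alone gives $z_n\to u_q=u_b$, so that $\{u_n:n\in Y_\varphi\}$ converges and $X$ is sequentially compact.
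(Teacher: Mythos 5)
Your proposal is correct and follows essentially the same route as the paper, which derives this corollary by rerunning the proof of Theorem \ref{sequentiallycompacttalgebra} and noting that either some $\varphi\in 2^\omega$ already yields a maximal branch of $T$ or one extends $q$ by appending $1$'s (legitimate by acceptability) to a maximal branch $b$ of $T$, to which the subsequence $\{u_n:n\in Y_\varphi\}$ then converges. Your write-up simply makes explicit the details the paper leaves as a remark.
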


We now present an alternative way to produce a sequentially compact separable $\cT$-algebra: In the above argument, if there is always a $\varphi\in2^\w$ such that $\{t_{\varphi\rest n}:n\in\w\}$ defines a maximal branch through $T$, then the corresponding subsequence $\{u_n:n\in Y_\varphi\}$ is convergent, and this does not depend on the $a_t$ being chosen as above. So given {\em any} $T$-algebra on an acceptable subtree $T$ of some $2^{<\kappa}$, if $T$ does not include any copy of $2^\w$, there is always one of these branches and the space is sequentially compact. 

Recall that a Kurepa tree is a subtree $K\subseteq2^{<\w_1}$ such that $K$ has height $\w_1$, each level is countable, and $K$ has at least $\w_2$-many branches.
In general, a $\kappa$-Kurepa tree is a tree of height $\kappa$ with levels of size $<\kappa$ and more than $\kappa$ many cofinal branches. 

It is easy to see that given a $\kappa$-Kurepa tree, there is an everywhere $2$-splitting $\kappa$-Kurepa tree and by removing the nodes at limit levels, gives an acceptable $\kappa$-Kurepa tree. 

\begin{cor}\label{Kurepa} Assume the existence of an acceptable 
     ${\mathfrak c}$-Kurepa tree $K\subseteq2^{<{\mathfrak c}}$. Then for any $T$-algebra on $T$, the Stone space is sequentially compact. 
\end{cor}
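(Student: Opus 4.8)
The plan is to deduce the corollary from the observation made just above it: for an acceptable tree $T$, the Stone space of \emph{any} $T$-algebra on $T$ is sequentially compact provided $T$ contains no \emph{copy of $2^\w$}, where by a copy of $2^\w$ I mean a Cantor scheme $\{t_\sigma:\sigma\in 2^{<\w}\}\subseteq T$ (so $t_\sigma\subsetneq t_{\sigma\conc i}$ and $t_{\sigma\conc 0},t_{\sigma\conc 1}$ are incomparable) all of whose branches are non-maximal, i.e.\ for every $\varphi\in 2^\w$ the chain $\{t_{\varphi\rest n}:n\in\w\}$ has a proper extension in $T$. The reason this suffices is that, whenever the Cantor scheme constructed in the proof of Theorem \ref{sequentiallycompacttalgebra} has a branch $\varphi$ whose chain is already a maximal chain of $T$, the convergence of $\{u_n:n\in Y_\varphi\}$ to $u_{q_\varphi}$ is exactly the base case $\alpha=\gamma$ of that argument and needs no induction above level $\gamma$; in particular it does not use the special pseudointersection labeling. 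So the whole task reduces to showing that an acceptable $\c$-Kurepa tree $K$ contains no copy of $2^\w$.

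To prove this I would argue by a level count. Fix an arbitrary Cantor scheme $\{t_\sigma:\sigma\in 2^{<\w}\}\subseteq K$ and, for $\varphi\in 2^\w$, set $q_\varphi=\bigcup_n t_{\varphi\rest n}$ and $\gamma_\varphi=l(q_\varphi)=\sup_n l(t_{\varphi\rest n})$; since these lengths strictly increase, each $\gamma_\varphi$ is a limit ordinal of countable cofinality. First I would bound all the $\gamma_\varphi$ at once: for each $n$ there are only finitely many $t_\sigma$ with $|\sigma|=n$, so $L_n=\max\{l(t_\sigma):|\sigma|=n\}<\c$, and then $\mu:=\sup_n L_n<\c$ because $\mathrm{cf}(\c)>\w$. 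Hence $\gamma_\varphi\le\mu<\c$ for all $\varphi$. Now I assume toward a contradiction that the scheme is a copy of $2^\w$, so every $q_\varphi$ extends properly in $K$; by clause (2) of acceptability each $q_\varphi$ then has an immediate successor $s_\varphi=q_\varphi\conc i_\varphi\in K$, lying at level $\gamma_\varphi+1\le\mu+1$. Because distinct scheme branches first differ at some finite stage $k$ and thereafter pass through the two incomparable children of $t_{\varphi\rest k}$, the assignment $\varphi\mapsto q_\varphi$, and hence $\varphi\mapsto s_\varphi$, is injective; this yields $2^\w=\c$ many distinct nodes of $K$, all of level $\le\mu+1$.

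To finish I would compare this with the Kurepa hypothesis. The initial segment $K\rest(\mu+2)=\bigcup_{\delta\le\mu+1}\mathrm{Lev}_\delta(K)$ is a union of $|\mu+2|<\c$ many levels, each of size $<\c$; assuming $\c$ regular, this segment has size $<\c$, contradicting the presence of $\c$ many distinct $s_\varphi$ inside it. Thus every Cantor scheme in $K$ has a maximal branch, $K$ contains no copy of $2^\w$, and the corollary follows from the observation above.

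The hard part will be the conceptual reduction rather than the count: identifying that the only property of $K$ used by the proof of Theorem \ref{sequentiallycompacttalgebra} is that the constructed Cantor scheme has a maximal branch, and checking that for such a branch the convergence reduces to the labeling-independent base case. Note that the separation condition $l(b_{u_n}\land q_\varphi)<\gamma_\varphi$ used there is automatic once $q_\varphi$ is a maximal chain: if $b_{u_n}\neq q_\varphi$ then, $q_\varphi$ being maximal, $b_{u_n}$ can neither extend nor be extended by $q_\varphi$, so they split below $\gamma_\varphi$ (and the at most one $n$ with $u_n=u_{q_\varphi}$ is discarded), so no thinning of the scheme is needed. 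The one genuinely delicate point in the count is the appeal to the regularity of $\c$ when bounding $|K\rest(\mu+2)|$: for singular $\c$ a union of $<\c$ levels each of size $<\c$ can have size $\c$, so regularity of $\c$ (which holds in the models producing such Kurepa trees) should be adopted as a standing assumption.
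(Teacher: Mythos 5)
Your proposal is correct and follows the same route the paper intends: the corollary is meant to fall out of the discussion preceding it, namely that for \emph{any} $T$-algebra on an acceptable tree, the Cantor scheme built in the proof of Theorem \ref{sequentiallycompacttalgebra} yields a convergent subsequence as soon as some branch of the scheme is a maximal chain of $T$ (the labeling-dependent induction above level $\gamma$ is only needed to extend non-maximal branches), and a $\mathfrak c$-Kurepa tree admits no Cantor scheme all of whose branches extend. Your two supporting observations --- that the separation condition $l(b_{u_n}\land q)<\gamma$ is automatic when $q$ is a maximal chain, and that the base case of the induction is labeling-independent --- are exactly the points the paper leaves implicit, and you verify them correctly. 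The one place where you fall short of the statement as written is the appeal to regularity of $\mathfrak c$ in the final count; this hypothesis is removable. Indeed, each $\gamma_\varphi$ is the supremum of an increasing sequence from the countable set $S=\{l(t_\sigma):\sigma\in 2^{<\w}\}$, hence lies in the closure of $S$, which is again countable (the map sending a limit point $\gamma$ of $S$ to $\min(S\setminus\gamma)$ is strictly increasing). So the nodes $s_\varphi$ occupy only countably many levels of $K$, and a union of countably many sets each of size $<\mathfrak c$ has size $<\mathfrak c$ because $\mathrm{cf}(\mathfrak c)>\w$ (K\"onig), which already contradicts the injectivity of $\varphi\mapsto s_\varphi$. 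With that adjustment your argument proves the corollary in full generality, with no assumption on the cofinality of $\mathfrak c$ beyond what ZFC provides.
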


Note that the existence of a ${\mathfrak c}$-Kurepa tree follows from $\diamondsuit^+({\mathfrak c})$ \cite{Devlin},  so this yields other consistent examples of large separable sequentially compact spaces.






The relevance of this final discussion is that if we were able to build a $T$-algebra over a $\c-$Kurepa tree consistent with $\p<\c$, we would obtain new models where there are sequentially compact separable spaces of size bigger than $\c$.

\section{Questions}

Of course, our main Question \ref{MainQ} has a consistent negative answer in the Miller model, but we do not know whether it is consistent that $X^\kappa$ is countably compact whenever $X$ is a separable sequentially compact space. Even though $MA$ implies the existence of large separable sequentially compact spaces, all the examples presented are compact and so a positive answer to the following is still possible:
\begin{que} Does $MA$ or $PFA$ imply that separable sequentially compact spaces have all powers countably compact?
\end{que}

This would follow with a positive answer to the following question:

\begin{que} Is $(*)$ consistent with all separable sequentially compact spaces have size $\leq \mathfrak c$.  Recall we defined the principle $(*)$: every tree $\pi$-base for $\omega^*$ has height $\mathfrak c$ and a cofinal branch. 
\end{que}
 Of course $(*)$ implies ${\mathfrak h}={\mathfrak b}={\mathfrak c}$ and the last equality implies the existence of $\cT$-algebra on $2^{\mathfrak c}$ (\cite{Sh:984}). And so if there is a ${\mathfrak c}$-Kurepa tree we could restrict this $\cT$-algebra to the Kurepa subtree to obtain a large example. 
 
The $\cT$-algebra construction using a $\c$-Kurepa tree does not require the levels of $T$ to be small, only the weaker consequence that the tree does not embed any copies of $2^\omega$ as a subtree. So let us define a tree $T\subseteq 2^{<\kappa}$, where $\kappa\leq {\mathfrak c}$ to be a {\em $\kappa$-Bernstein-Kurepa tree} if it does not embed $2^\omega$ as a subtree and has more than $\kappa$ many branches.

\begin{que} Is there an $\omega_1$-Bernstein-Kurepa tree that is not a Kurepa-tree? Or is it consistent that there are no $\omega_1$-Kurepa trees but there is an $\omega_1$-Bernstein-Kurepa tree? 
\end{que}

And relevant to the above discussion:

\begin{que} Does ${\mathfrak{b=c}}$ imply the existence of a ${\mathfrak c}$-Bernstein-Kurepa tree?
\end{que}

It is possible that CH implies the existence of an $\omega_1$-Bernstein-Kurepa tree. Or even that there is a ${\mathfrak c}$-Bernstein-Kurepa tree in ZFC. 




\bibliography{seqcompsep.bib}{}
\bibliographystyle{plain}
\end{document}